\newcommand{\R}{\mathbbm{R}}
\newcommand{\ve}{\varepsilon}
\def\d{d}
\DeclareMathOperator{\dist}{dist}
\DeclareMathOperator{\sgn}{sgn}
\newcommand{\abs}[1]{\lvert#1\rvert}
\newcommand{\norm}[1]{\left\|#1\right\|}
\newcommand{\set}[1]{\left\{#1\right\}}
\newcommand{\inner}[2]{\left<#1,#2\right>}
\newcommand{\supp}{\mathop{\mathrm{supp}}}
\newcommand{\cone}{\mathcal{K}}
\newtheorem{lemma}{Lemma}[section]
\newaliascnt{proposition}{lemma}
\newaliascnt{theorem}{lemma}
\newtheorem{theorem}[theorem]{Theorem}
\newaliascnt{corollary}{lemma}
\newtheorem{corollary}[corollary]{Corollary}
\newaliascnt{invpro}{lemma}
\newaliascnt{definition}{lemma}
\newtheorem{definition}[definition]{Definition}
\newaliascnt{remark}{lemma}
\newtheorem{remark}[remark]{Remark}
\newenvironment{proof}{{\textit{Proof:}}}{\hfill \textbf{$\square$}\vspace{0.2cm}}
\definecolor{darkgreen}{rgb}{.0,.25,.0}
\title{Photoacoustic Tomography With Spatially Varying Compressibility and Density}
\author{Zakaria Belhachmi
\thanks{Laboratoire de Math\'ematiques LMIA, Universit\'e de Haute Alsace, 4, rue des Fr\`eres Lumi\`ere,
68200 Mulhouse, France.(\tt{zakaria.belhachmi@uha.fr})} \and
Thomas Glatz
\thanks{Computational Science Center, University of Vienna, Oskar-Morgenstern Platz 1, A-1090 Vienna, Austria.
(\tt{thomas.glatzl@univie.ac.at})}\and
Otmar Scherzer
\thanks{Computational Science Center, University of Vienna, Oskar-Morgenstern Platz 1, A-1090 Vienna, Austria,
and Johann Radon Institute for Computational and Applied Mathematics (RICAM),
Austrian Academy of Sciences, Altenbergerstra\ss{}e 69,
A-4040 Linz, Austria.(\tt{otmar.scherzer@univie.ac.at})}
}
\begin{document}

\maketitle

\begin{abstract}
This paper investigates photoacoustic tomography with two spatially varying acoustic parameters, the compressibility 
and the density. We consider the reconstruction of the absorption density parameter (imaging parameter of Photoacoustics)
with complete and partial measurement data. 
We investigate and analyze three different numerical methods for solving the imaging problem and compare the results. 
\end{abstract}
\noindent
\textbf{Keywords:} Photoacoustic imaging, spatially varying compressibility and density, variable sound speed, regularization,
time reversal.

\section{Introduction}
Photoacoustic Imaging (PAI) is a novel technique for tomographical imaging of 
small biological or medical specimens.
The method makes use of the fact that an object expands after being exposed to 
ultrashort electromagnetic radiation, and emits an ultrasonic wave (see e.g. \cite{XuWan06,Wan09}). 
The resulting acoustic pressure is assumed to be proportional to the electromagnetic \emph{absorption},
which is the imaging parameter of Photoacoustics. It provides detailed anatomical and functional 
information. 

Opposed to the conventional photoacoustic imaging \cite{XuWan06,Wan09}, which is based on the assumption that 
the \emph{compressibility} and \emph{density} of the medium are constant (and thus in turn the sound speed),
this paper assumes \emph{both} of these parameters spatially varying.
The mathematical model describing the propagation of the ultrasonic pressure considered here is 
\begin{equation}\label{eq:wave_intro}
\begin{aligned}
\kappa(x)y''(x,t) - \nabla \cdot  \left(\rho(x)^{-1}\nabla y(x,t) \right) &=0 \text{ in } \R^n  \times (0,\infty)\\
        y(x,0) &= f(x) \text{ in } \R^n\\
        y'(x,0) &= 0 \text{ in } \R^n.
\end{aligned}
\end{equation}
Here, $\kappa$ is the material compressibility, $\rho$ denotes the density and $f$ denotes the amount of absorbed energy, 
i.e. the imaging parameter that encodes the material properties of physiological interest in PAI. 
We emphasize that the 
speed of sound is given by $c(x)=\frac1{\sqrt{\kappa\rho}}$ and that this equation is more general than
\begin{equation}\label{eq:wave_old}
\begin{aligned}
y''(x,t) - c^2(x) \Delta y(x,t) & = 0 \text{ in } \R^n  \times (0,\infty)\\
        y(x,0) & = f(x)  \text{ in } \R^n\\
        y'(x,0) &= 0 \text{ in } \R^n,
\end{aligned}
\end{equation}
which also describes acoustic wave propagation in the case of variable sound speed. The latter equation is derived from \autoref{eq:wave_intro} under the additional assumption that $\rho$ is spatially slowly varying.
For further details on the derivation of \autoref{eq:wave_intro} from fluid- and thermodynamics, we refer to \cite[Chapter 8.1]{ColKre92}.

The \emph{photoacoustic reconstruction} consists in determining the function $f$ from 
measurement data of $y$ on a surface $S\subseteq \partial \Omega$ over time $(0,T)$.

There exists a huge amount of literature on reconstruction formulas in the case $c \equiv 1$, see for instance 
\cite{XuXuWan02,XuWan02a,XuWan02b,XuXuWan03,XuWan05,KucKun08,Kuc14} to name but a few. Time reversal in the case of variable 
sound speed has been studied for instance in \cite{AgrKuc07,HriKucNgu08,SteUhl09,QiaSteUhlZha11,TreVarZhaLauBea11,HuaNieSchWanAna12}.
Time reversal for photoacoustic imaging based on \autoref{eq:wave_intro} as well as on \autoref{eq:wave_old} has been given in \cite{SteUhl09} - note that both associated wave operators are special cases of the general operator from \cite{SteUhl09}.
Their theory has been generalized to the elastic wave equation in \cite{Tit12}.

In this paper we focus on numerical realization and regularization theory of photoacoustic imaging based 
on \autoref{eq:wave_intro} with different numerical methods. Most closely related to our numerical approach 
is a time reversal algorithm from \cite{TreCox10}, which employs the formula \autoref{eq:TR_TC} below. 
Recently we applied iterative regularization techniques in the case of variable sound speed \cite{BelGlaSch15_report} 
and compared it with time reversal. The goal here is to generalize time reversal and iterative regularization for photoacoustic 
imaging in the case of spatially variable density and compressibility. A convergence in the least-squares-sense is thereby guaranteed by standard results from regularization theory (see e.g. \cite{Gro84,EngHanNeu96}).

The paper is organized as follows: In \autoref{sec:direct} we analyze the mathematical equations describing 
wave propagation in the case of spatially variable compressibility and density. 
Imaging based on this model is analyzed in \autoref{sec:pai}. 
Numerical results are presented for three different methods; Time reversal, Neumann series, and Landweber iteration 
in \autoref{sec:results}. The latter two seem to be new for the presented equation.
We also investigate the case of partial measurement data.

%

\subsection{Notation}
In the beginning we summarize the basic notation, which is used throughout the paper.

$\Omega$ denotes a non-empty, open, bounded and connected domain in $\R^n$ with Lipschitz and piecewise 
$C^1$-boundary $\partial \Omega$. Moreover, $S \subseteq \partial \Omega$ is connected and relatively open. 
The vector $\bm n(x)$, with $x\in \partial \Omega$, denotes the outward pointing unit normal vector of $\Omega$.
The absorption density $f$, the compressibility $\kappa$ and the density $\rho$ are supposed to satisfy: 
\begin{itemize}
\item $\kappa,\rho \in C^1(\R^n)$, satisfying $0 < \rho_{\min} \leq \rho(x) \leq \rho_{\max}$, 
        $0 < \kappa_{\min} \leq \kappa(x) \leq \kappa_{\max}$. We also define $c_{\max}:=(\kappa_{\min}\rho_{\min})^{-1/2}$.
       Moreover, we assume that $\kappa,\rho$ are constant in $\mathcal{C}\Omega$ and satisfy 
       $\kappa \rho = 1$ there. 
 \item The absorption density function $f$ has support in $\Omega$: $\text{supp}(f) \subseteq \Omega$.
\end{itemize}
For the sake of simplicity of notation we omit space and time arguments of functions whenever this is convenient and does 
not lead to confusions.

We use the following Hilbert spaces:
\begin{itemize}
\item We denote by $L^2(\Omega) = \set{\phi \in L^2(\R^n) : \phi \equiv 0 \text{ in } \mathcal{C}\Omega}$, where $\mathcal{C}\Omega$
      denotes the complement of $\Omega$, with inner product
      \begin{equation*}
      \inner{\phi_1}{\phi_2}_{L^2(\Omega)}  = \int_{\R^n} \phi_1 \phi_2  \d x.
      \end{equation*}
\item For $\hat{\Omega} = \Omega$ or $\R^n$:
      \begin{itemize}
      \item Let $H_0^1(\hat{\Omega})$ be the closure of differentiable functions on $\R^n$ with compact support in 
            $\hat{\Omega}$, associated with the non-standard (but equivalent) inner product
            \begin{equation}
             \label{eq:kappa_rho}
             \inner{\phi_1}{\phi_2}_{1;\kappa,\rho} = \int_{\R^n}\kappa\phi_1\phi_2+ \rho^{-1}\nabla \phi_1\cdot \nabla \phi_2 \d x.
            \end{equation}
            The associated norm is denoted by $\norm{\phi}_{1;\kappa,\rho}$.
      \item The seminorm associated to the inner product
      		\begin{equation}\label{eq:rho}
      		\inner{\phi_1}{\phi_2}_{1;\rho} = \int_{\mathbb{R}^n}\rho^{-1}\nabla\phi_1\cdot\nabla \phi_2 dx
      		\end{equation}
      		is denoted by $\abs{\phi}_{1;\rho}$.
      \item The norm associated with the inner product
            \begin{equation*}
            \inner{\phi_1}{\phi_2}_1 = \int_{\hat{\Omega}} \phi_1 \phi_2 + \nabla \phi_1\cdot \nabla \phi_2 \d x
            \end{equation*}
            is denoted by $\norm{\phi}_1$.  
      \item The norms $\norm{\phi}_1$ and $\norm{\phi}_{1;\kappa,\rho}$ are equivalent. In fact, we have
      \begin{equation}\label{eq:equiv2}
      \begin{aligned}
      & \sqrt{\min\set{\kappa_{\min},{\rho_{\max}^{-1}}}}
      \norm{\phi}_1 \leq \norm{\phi}_{1;\kappa,\rho} 
      \leq \sqrt{\max\set{\kappa_{\max},{\rho_{\min}^{-1}}}} \norm{\phi}_1\,,\\
      & \qquad \text{ for all } \phi \in H_0^1(\hat\Omega).
      \end{aligned}
      \end{equation}
     \end{itemize}
\item $L^2(S)$ denotes the standard Hilbert space of square integrable functions on $\partial \Omega$ with support in $S$, 
      together with the inner product 
      \begin{equation*}
            \inner{\phi_1}{\phi_2}_{L^2(S)} = \int_S \phi_1(x) \phi_2(x) \d S(x).
      \end{equation*}
      $L^2(\Sigma)$ denotes the standard Hilbert space of square integrable functions on $\partial \Omega \times (0,T)$ with 
      support in $\Sigma :=  S  \times (0,T)$, together with the inner product 
      \begin{equation*}
            \inner{\phi_1}{\phi_2}_{L^2(\Sigma)} = \int_0^T \int_S \phi_1(x,t) \phi_2(x,t) \d S(x) dt.
      \end{equation*}
      The induced norms are denoted by $\norm{\cdot}_{L^2(S)}$, $\norm{\cdot}_{L^2(\Sigma)}$. 
      
      $C_0^\infty(\Sigma)$ denotes the space of smooth functions with support in $\Sigma$.
\item The trace operator 
      $\gamma_{\Omega} : H^1(\R^n) \to L^2(\partial \Omega)$ restricts functions defined on $\R^n$ 
      onto $\partial \Omega$, respectively. 
      This operator is the composition of the standard trace operator $\gamma: H^1(\Omega) \to L^2(\partial \Omega)$
      and the restriction operator $R: H^1(\R^n) \to H^1(\Omega)$, which are both bounded \cite[Theorem 5.22]{Ada75}, and thus 
      itself bounded. 
      We denote 
      \begin{equation}
       \label{eq:norm}
       \mathcal{C}_\gamma:=\norm{\gamma \circ R}.
      \end{equation}
\end{itemize}

\section{Direct Problem of Wave Propagation}
\label{sec:direct}
In this section we are analyzing the \emph{wave operator} $L$ mapping the absorption density $f$ onto the solution 
$y$ of \autoref{eq:wave_intro} restricted to $\Sigma$:
\begin{equation}\label{obs_op}
L: H_0^1(\Omega)  \rightarrow L^2(\Sigma), \quad  f \mapsto y|_\Sigma.
\end{equation}
First, we show that the operator $L$ is bounded. 
Analogous to \cite{BelGlaSch15_report}, we define the total wave energy by
\begin{align}\label{eq:wave_energy}
E(t;y) := \frac{1}{2} \int_{\R^n} \kappa y'{}^2 + \rho^{-1}\left| \nabla y\right|^2 \d x.
\end{align}
The time derivative of $E$, taking into account \autoref{eq:wave_intro}, is 
\begin{equation*}
E'(t;y) = \int_{\R^n}\kappa y'' y' - \nabla\cdot\left(\rho^{-1}\nabla y\right) y' \d x = 0 \text{ for all } t >0 .
\end{equation*}
Consequently,
\begin{align}
\label{eq:E}
E(t;y) = E(0;y) = \frac12 \abs{f}_{1;\rho}^2\text{ for all } t >0.
\end{align}
This, together with \autoref{eq:wave_energy} shows that
\begin{align}
\label{eq:wave_bound}
\int_{\R^n}\kappa  y'{}^2 \d x \leq \abs{f}_{1;\rho}^2
\text{ and }
\abs{y(t)}_{1;\rho} \leq \abs{f}_{1;\rho} 
\text{ for all } t >0.
\end{align}
A-priori this inequality does not provide a bound for $y$ in the whole $\R^n$ with respect to the standard $H^1$-norm.
This is provided for instance by the following lemma:
\begin{lemma}
 Let $y$ be the solution of \autoref{eq:wave_intro}, then 
 \begin{equation}
 \label{eq:fund_const}
  \norm{y(t)}_{1;\kappa,\rho} \leq \mathcal{C}(T)\norm{f}_{1;\kappa,\rho},\qquad 
  \text{ for all } t \in (0,T),
 \end{equation}
 where
 \begin{equation*}
  \mathcal{C}(T):= \sqrt{\max\set{1+2T^2,2}}.
 \end{equation*}
\end{lemma}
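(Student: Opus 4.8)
The plan is to control the full $H^1$-norm of $y(t)$ by combining the already-established energy bound~\eqref{eq:wave_bound}, which controls only the seminorm $\abs{y(t)}_{1;\rho}$ and the weighted kinetic term $\int \kappa y'^2\,dx$, with a Poincaré-type estimate that recovers the missing $L^2$-control of $y(t)$ itself. The key observation is that the $L^2$-part of the norm can be reconstructed from the initial datum $f$ and the time derivative $y'$ via the fundamental theorem of calculus.

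First I would write, for a.e. $x\in\R^n$ and all $t\in(0,T)$,
\[
y(x,t) = f(x) + \int_0^t y'(x,s)\,ds,
\]
using the initial condition $y(x,0)=f(x)$. Squaring, integrating the weight $\kappa$ against this identity, and applying the Cauchy–Schwarz inequality in the $s$-variable over $(0,t)\subseteq(0,T)$ gives
\[
\int_{\R^n}\kappa\, y(t)^2\,dx \le 2\int_{\R^n}\kappa f^2\,dx + 2T\int_0^T\!\!\int_{\R^n}\kappa\, y'(s)^2\,dx\,ds.
\]
By the kinetic bound in~\eqref{eq:wave_bound}, the inner integral $\int_{\R^n}\kappa\, y'(s)^2\,dx$ is bounded by $\abs{f}_{1;\rho}^2$ for every $s$, so the double integral is at most $T^2\abs{f}_{1;\rho}^2$. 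Hence
\[
\int_{\R^n}\kappa\, y(t)^2\,dx \le 2\int_{\R^n}\kappa f^2\,dx + 2T^2\abs{f}_{1;\rho}^2.
\]

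Next I would add to this the seminorm bound $\abs{y(t)}_{1;\rho}^2\le\abs{f}_{1;\rho}^2$ from~\eqref{eq:wave_bound}. Recalling the definition~\eqref{eq:kappa_rho} of $\norm{\cdot}_{1;\kappa,\rho}^2$ as $\int_{\R^n}\kappa\phi^2 + \rho^{-1}\abs{\nabla\phi}^2\,dx$, the left-hand side $\int\kappa y(t)^2\,dx + \abs{y(t)}_{1;\rho}^2$ is exactly $\norm{y(t)}_{1;\kappa,\rho}^2$. On the right-hand side I collect terms: $\int\kappa f^2\,dx \le \norm{f}_{1;\kappa,\rho}^2$ and $\abs{f}_{1;\rho}^2\le\norm{f}_{1;\kappa,\rho}^2$, which yields
\[
\norm{y(t)}_{1;\kappa,\rho}^2 \le 2\norm{f}_{1;\kappa,\rho}^2 + (2T^2+1)\norm{f}_{1;\kappa,\rho}^2.
\]
To get the stated constant $\mathcal{C}(T)=\sqrt{\max\set{1+2T^2,2}}$, I would instead bookkeep the two contributions to $\norm{y(t)}_{1;\kappa,\rho}^2$ separately: the $\int\kappa y(t)^2\,dx$ part contributes the factor $\max\set{2,2T^2+1}$ against $\norm{f}_{1;\kappa,\rho}^2$ when one notes $2\int\kappa f^2\,dx+2T^2\abs{f}_{1;\rho}^2\le\max\set{2,2T^2}\cdot\norm f_{1;\kappa,\rho}^2$ is too lossy — so the sharper route is to keep $\int\kappa y(t)^2 + \abs{y(t)}_{1;\rho}^2\le 2\int\kappa f^2 + (2T^2+1)\abs{f}_{1;\rho}^2 \le \max\set{2,2T^2+1}\,\norm{f}_{1;\kappa,\rho}^2$, and take square roots.

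The only mildly delicate point is the regularity needed to justify the pointwise identity $y(x,t)=f(x)+\int_0^t y'(x,s)\,ds$ and the interchange of integration order (Fubini/Tonelli) in the double integral; this is routine given that $y$ is the finite-energy solution of~\eqref{eq:wave_intro} with $f\in H_0^1(\Omega)$, so $y'\in L^\infty((0,T);L^2_\kappa)$ by~\eqref{eq:wave_bound} and $t\mapsto y(t)$ is absolutely continuous into $L^2$. I do not expect any real obstacle here — the argument is a weighted Poincaré-in-time estimate, and the constant comes out by elementary bookkeeping. I would finish by taking square roots to obtain~\eqref{eq:fund_const}.
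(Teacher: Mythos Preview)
Your argument is correct and is essentially identical to the paper's proof: both use $y(x,t)-f(x)=\int_0^t y'(x,s)\,ds$, apply Cauchy--Schwarz in $s$ together with the kinetic bound from \eqref{eq:wave_bound} to control $\int_{\R^n}\kappa\,y(t)^2\,dx$, then add the seminorm bound $\abs{y(t)}_{1;\rho}^2\le\abs{f}_{1;\rho}^2$ and collect terms as $\max\{2,1+2T^2\}\norm{f}_{1;\kappa,\rho}^2$. One small slip in your write-up: the double integral $\int_0^T\!\int_{\R^n}\kappa\,y'(s)^2\,dx\,ds$ is bounded by $T\abs{f}_{1;\rho}^2$, not $T^2\abs{f}_{1;\rho}^2$; the extra factor $T$ comes from the $2T$ already sitting in front, so your displayed conclusion $\int\kappa\,y(t)^2\le 2\int\kappa f^2+2T^2\abs{f}_{1;\rho}^2$ is nonetheless correct.
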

\begin{proof}
For arbitrary $\hat{t} \in (0,T)$ it follows from \autoref{eq:wave_bound} that:
\begin{equation*}
 \begin{aligned}
   ~ & \int_{\R^n}\kappa(x) (y(x,t)-y(x,0))^2 \d x = \int_{\R^n}\kappa(x) \left( \int_0^{\hat{t}} y'(x,\hat{t}) d\hat{t} \right)^2 \d x\\
\leq & \hat{t} \int_0^t \int_{\R^n}\kappa y'{}^2  \d x d\hat{t}
  \leq T^2 \abs{f}_{1;\rho}^2.
 \end{aligned}
\end{equation*}
With the elementary inequality $(a-b)^2 \geq \frac12 a^2 - b^2$ it follows that 
\begin{equation*}
 \begin{aligned}
   ~ & \int_{\R^n}\kappa(x) (y(x,t))^2 \d x \\
\leq & 2 \int_{\R^n}\kappa(x) (y(x,t)-y(x,0))^2 \d x + 2\int_{\R^n}\kappa(x) (y(x,0))^2 \d x\\
\leq & 2T^2 \abs{f}_{1;\rho}^2 + 2 \norm{\sqrt{\kappa} f}_{L^2(\mathbb R^n)}^2.
 \end{aligned}
\end{equation*}
This, together with \autoref{eq:wave_bound} shows the assertion.
\end{proof}

Now, we prove boundedness of $L$:
\begin{theorem}
The operator $L : H_0^1(\Omega) \to L^2(\Sigma)$ is bounded and 
\begin{equation} \label{eq:norm_l}
\norm{L} \leq \frac{\mathcal{C}_\gamma \mathcal{C}(T)\sqrt{T}}{\sqrt{\min\set{\kappa_{\min},{\rho_{\max}^{-1}}}}}.
\end{equation}
\end{theorem}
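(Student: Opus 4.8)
The plan is to bound the $L^2(\Sigma)$-norm of $y|_\Sigma = Lf$ by integrating the squared trace over time and invoking the previous lemma together with boundedness of the trace operator. First I would write
\begin{equation*}
\norm{Lf}_{L^2(\Sigma)}^2 = \int_0^T \int_S \abs{y(x,t)}^2 \,\d S(x)\,\d t \leq \int_0^T \norm{\gamma_\Omega y(\cdot,t)}_{L^2(\partial\Omega)}^2 \,\d t,
\end{equation*}
using that the support of the integrand lies in $S$. Next, since $y(\cdot,t) \in H^1(\R^n)$ and $\gamma_\Omega = \gamma \circ R$, the definition \autoref{eq:norm} of $\mathcal{C}_\gamma$ gives $\norm{\gamma_\Omega y(\cdot,t)}_{L^2(\partial\Omega)} \leq \mathcal{C}_\gamma \norm{y(\cdot,t)}_1$, so the time integral is bounded by $\mathcal{C}_\gamma^2 \int_0^T \norm{y(\cdot,t)}_1^2 \,\d t$.

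Then I would pass from the standard $H^1$-norm to the weighted norm via the left inequality in \autoref{eq:equiv2}, namely $\norm{y(\cdot,t)}_1 \leq \norm{y(\cdot,t)}_{1;\kappa,\rho} / \sqrt{\min\set{\kappa_{\min},\rho_{\max}^{-1}}}$, and apply the previous lemma's estimate \autoref{eq:fund_const} to get $\norm{y(\cdot,t)}_{1;\kappa,\rho} \leq \mathcal{C}(T)\norm{f}_{1;\kappa,\rho}$ uniformly in $t \in (0,T)$. Stringing these together, the integrand is bounded by a constant independent of $t$, so the time integral just contributes a factor $T$:
\begin{equation*}
\norm{Lf}_{L^2(\Sigma)}^2 \leq \mathcal{C}_\gamma^2 \cdot T \cdot \frac{\mathcal{C}(T)^2 \norm{f}_{1;\kappa,\rho}^2}{\min\set{\kappa_{\min},\rho_{\max}^{-1}}}.
\end{equation*}
Taking square roots yields exactly the claimed bound \autoref{eq:norm_l} on $\norm{L}$.

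There is no real obstacle here: the work was already done in the lemma, and this theorem is essentially bookkeeping — combining the trace bound, the norm equivalence, and the energy estimate, with the $\sqrt{T}$ appearing from the trivial time integration. The only point requiring minor care is the measurability/regularity needed to justify $t \mapsto y(\cdot,t)$ being a well-defined $H^1(\R^n)$-valued map so that the trace can be applied slicewise and the integral in $t$ makes sense; this is part of the standing well-posedness of \autoref{eq:wave_intro} and I would simply reference it rather than re-prove it.
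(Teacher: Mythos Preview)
Your proposal is correct and follows essentially the same approach as the paper: apply the trace bound $\mathcal{C}_\gamma$ slicewise in $t$, convert to the weighted norm via \autoref{eq:equiv2}, invoke the lemma's estimate \autoref{eq:fund_const}, and integrate trivially over $(0,T)$ to pick up the factor $T$. The paper's proof is the same chain of inequalities compressed into one displayed line.
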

\begin{proof}
For given $f$ let $y$ be the solution of \autoref{eq:wave_intro}. 
From \autoref{eq:wave_energy} it follows that the solution $y$ of \autoref{eq:fund_const} is in $H^1(\R^n)$ for every $t > 0$.
Thus from \autoref{eq:norm} and \autoref{eq:fund_const} it follows that 
\begin{align*}
\norm{y}_{L^2(\Sigma)}^2 = \int_0^T \int_{\partial \Omega} y^2(t) d S(x) d t
\leq \mathcal{C}_\gamma^2 \int_0^T \norm{y(t)}_{1}^2 d t 
\leq \frac{\mathcal{C}_\gamma^2 \mathcal{C}(T)^2 T \norm{f}_{1;\kappa,\rho}^2}{\min\set{\kappa_{\min},{\rho_{\max}^{-1}}}}.
\end{align*}
\end{proof}

\begin{remark}\label{rem:opH1}
From \cite[Theorem 1]{Tat98} it follows that the trace $L[f]$ is in fact in $H^{3/4}_\text{loc}\left(\partial\Omega\times(0,T)\right)$. Therefore, by finite speed of propagation (\autoref{thm:prop_speed}), if we in addition assume that $f$ is supported away from $\partial\Omega$, $L[f]$ is in fact in $H^{3/4}\left(\partial\Omega\times(0,T)\right)$.
The proof does not follow in a straightforward way from standard trace results, but utilizes the theory of Fourier integral operators and microlocal analysis.
In special cases, this result can be further improved, see \cite[Remark 5]{SteUhl09} and also \cite{BaoSym91,BaoSym93}. For instance, 
for $\Omega$ strictly convex, we have $L[f]\in H^1\left(\partial\Omega\times(0,T)\right)$.
As a consequence, $L:H^1_0(\Omega)\to L^2(\Sigma)$ is compact, because it is a composition of bounded and a compact operator, 
using the fact that the embedding from $H^1(\partial\Omega\times(0,T))$ to $L^2(\partial\Omega\times(0,T))$ is compact 
(see e.g. \cite[Theorem 2.34]{Aub82}). 
\end{remark}

In the following we characterize the adjoint of $L: H_0^1(\Omega) \to L^2(\Sigma)$ on a dense subset of $L^2(\Sigma)$.
Since $L^*: L^2(\Sigma) \to H_0^1(\Omega)$ is bounded (in fact, $\norm{L}=\norm{L^*}$), a characterization 
follows by limits of convergent sequences on dense subsets. Here we characterize $L^*$ on $C_0^\infty(\Sigma)$
first and extend it by convergent sequences to $L^2(\Sigma)$. 
\begin{definition}
Let $i$ be the embedding operator from $H_0^1(\Omega)$ to $L^2(\Omega)$. 
Then $i^*: L^2(\Omega) \to H_0^1(\Omega)$ is the operator 
which maps a function $\psi \in L^2(\Omega)$ onto the solution of the equation
\begin{equation}\label{eq:ell1}
D u = \psi\text{ in } \Omega, \qquad u=0 \text{ on } \partial \Omega,
\end{equation}
with $D:=\kappa -\nabla\cdot\rho^{-1}\nabla$.
In other words $i^* = D^{-1}$.
\end{definition}

In the following we derive the adjoint $L^*$ of the operator $L$, which is required for 
the implementation of the Landweber iteration below, on a dense subset of $L^2(\Sigma)$.
The boundedness of $L^*$ is guaranteed by elementary Hilbert space theory. Therefore, we get a 
characterization on $L^2(\Sigma)$ by limits of convergent sequences.
\begin{theorem}
For $h \in C_0^\infty(\Sigma)$ the adjoint of the operator $L$, defined in \autoref{obs_op}, is given by
\begin{equation}\label{eq:adjoint0}
L^*[h] = i^* \circ L_D^*[h]
\end{equation}
where  
\begin{equation}
\label{eq:adjoint}
L_D^*[h] = \left. \kappa z'(0)\right|_\Omega,
\end{equation}
and $z:=z(h)$ is the weak solution of
 \begin{equation}\label{eq:wave_adj}
 \begin{aligned}
         \kappa z'' - \nabla\cdot(\rho^{-1}\nabla z) &=0 \text{ in } \R^n \backslash \partial \Omega \times (0,T),\\
          z(T) = z'(T) &=0  \text{ in } \R^n, \\          
          \left[ z \right] =0, \quad
        \left[  \frac{\partial z}{\partial \bm n} \right] &= h \text{ on } \Sigma,\\
        \left[ z \right] =0, \quad
        \left[  \frac{\partial z}{\partial \bm n} \right] &= 0 \text{ on }  \partial \Omega\setminus \Sigma\times(0,T).        
\end{aligned}
\end{equation}
Here 
\begin{equation*} [z]:=z^+|_\Sigma-z^-|_\Sigma \text{ and }
   \left[   \frac{\partial z}{\partial \bm n}\right] := 
   \left.\frac{\partial z^+}{\partial \bm n}\right|_\Sigma - \left.\frac{\partial z^-}{\partial \bm n} \right|_{\Sigma}
   \end{equation*}
where $z^+:=z|_{\mathcal{C}\Omega\times(0,T)}$ and $z^-:=z|_{\Omega\times(0,T)}$.
\end{theorem}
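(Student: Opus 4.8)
The plan is to verify the identity $\inner{L[f]}{h}_{L^2(\Sigma)} = \inner{f}{L^*[h]}_{H_0^1(\Omega)}$ directly, for $f \in H_0^1(\Omega)$ and $h \in C_0^\infty(\Sigma)$, by a multiplier argument: integrate the wave equation for $y$ against the time-reversed adjoint state $z$ over $\R^n \times (0,T)$, and integrate the equation for $z$ against $y$, then subtract. First I would write down the variational formulations of both problems. The function $y = L[f]$ solves \autoref{eq:wave_intro} in $\R^n$, so it is smooth across $\partial\Omega$; the function $z = z(h)$ solves \autoref{eq:wave_adj}, which is the same homogeneous wave equation on $\R^n \setminus \partial\Omega$ but with a prescribed jump in the normal derivative across $\Sigma$ (and no jump off $\Sigma$), together with vanishing Cauchy data at $t = T$. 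The key is that when one tests the $z$-equation with $y$ and integrates by parts in space over $\Omega$ and over $\mathcal C\Omega$ separately, the boundary terms on $\partial\Omega$ collect into $\int_0^T \int_{\partial\Omega} \bigl(\rho^{-1}\llbracket \partial_{\bm n} z\rrbracket\bigr)\, y \, \d S \, \d t$ — using continuity of $y$ and of $\rho^{-1}$ across $\partial\Omega$ — which by the jump condition equals $\int_0^T\int_S \rho^{-1} h\, y\, \d S\, \d t$. Since $\rho$ is constant (with $\kappa\rho = 1$, hence $\rho = \rho^{-1} = 1$) in $\mathcal C\Omega \supseteq$ a neighborhood of $\partial\Omega$, this reduces exactly to $\inner{L[f]}{h}_{L^2(\Sigma)}$.

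Next I would handle the time integrations by parts. Testing $\kappa y'' - \nabla\cdot(\rho^{-1}\nabla y) = 0$ against $z$ and $\kappa z'' - \nabla\cdot(\rho^{-1}\nabla z) = 0$ against $y$ over $(0,T)$, the second-order-in-time terms produce, after two integrations by parts, boundary contributions at $t = 0$ and $t = T$ of the form $\bigl[\kappa(y' z - y z')\bigr]_0^T$. At $t = T$ these vanish because $z(T) = z'(T) = 0$. At $t = 0$ they contribute $-\kappa y'(0) z(0) + \kappa y(0) z'(0) = \kappa f z'(0)$, since $y(0) = f$ and $y'(0) = 0$. The spatial terms $\int \rho^{-1}\nabla y \cdot \nabla z$ cancel between the two testings (after moving the divergence onto the test function via Green's formula, which is where the $\partial\Omega$ boundary term above arises). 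Putting this together yields
\begin{equation*}
\inner{L[f]}{h}_{L^2(\Sigma)} = \int_\Omega \kappa f\, z'(0) \, \d x = \inner{f}{\left.\kappa z'(0)\right|_\Omega}_{L^2(\Omega)} = \inner{f}{L_D^*[h]}_{L^2(\Omega)}.
\end{equation*}
Finally, to pass from the $L^2(\Omega)$ pairing to the $H_0^1(\Omega)$ inner product one uses the operator $i^* = D^{-1}$ from the preceding definition: for any $w \in L^2(\Omega)$ and $f \in H_0^1(\Omega)$, $\inner{f}{w}_{L^2(\Omega)} = \inner{f}{i^*w}_{1;\kappa,\rho}$, because $i^*w$ solves $Du = w$ with $D = \kappa - \nabla\cdot\rho^{-1}\nabla$ and the bilinear form of $D$ is precisely $\inner{\cdot}{\cdot}_{1;\kappa,\rho}$. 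Hence $L^*[h] = i^*\bigl(L_D^*[h]\bigr) = i^* \circ L_D^*[h]$ as claimed.

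The main obstacle is the rigorous justification of the integration-by-parts manipulations given the limited regularity: $z$ has a jump in its normal derivative across $\Sigma$, so Green's formula must be applied piecewise on $\Omega$ and $\mathcal C\Omega$ and the interface terms tracked carefully, and one must know that $z$ is regular enough (in $H^1$ in space, $H^2$ or $C^1$ in time near $t=0$) for $z'(0)$ and the boundary integrals to make sense. For this I would invoke the weak/transposition solution theory for the nonstandard transmission problem \autoref{eq:wave_adj} — arguing, as is standard, that the identity first holds for $h$ smooth enough that $z$ is a classical solution and then extends to $h \in C_0^\infty(\Sigma)$ by density and the a priori energy bounds; the boundedness of $L^*$ (from $\norm{L^*} = \norm{L} < \infty$, already established) then lets one extend the characterization from the dense set $C_0^\infty(\Sigma)$ to all of $L^2(\Sigma)$ by continuity, exactly as announced before the theorem. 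A secondary point to check is that the compatibility condition $\kappa\rho = 1$ (so $\rho = 1$) near $\partial\Omega$ is genuinely used to identify the interface term with the plain $L^2(\Sigma)$ pairing without stray factors of $\rho$.
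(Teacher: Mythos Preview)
Your proposal is correct and follows essentially the same multiplier/integration-by-parts argument as the paper's own proof, which presents the identity $\int_\Sigma h\,y = \int_\Omega \kappa z'(0) f = \inner{i^*[\kappa z'(0)]}{f}_{1;\kappa,\rho}$ in a single compressed line without spelling out the piecewise Green's formula or the time-boundary terms that you expand in detail. One small remark: the paper only assumes $\kappa\rho=1$ in $\mathcal C\Omega$, not $\rho=1$ individually, so the stray $\rho^{-1}$ you flag on the interface need not literally equal $1$; but since $\rho$ is constant there the factor is harmless (it can be absorbed into the definition of the jump or of $h$), and the paper's proof is silent on this point as well.
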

\begin{proof}
The existence of a weak solution of \autoref{eq:wave_adj} is analogous to \cite[Appendix A]{BelGlaSch15_report}, where 
the transmission problem has been studied for variable sound speed.
Multiplying the first equation of \autoref{eq:wave_adj} by the solution of \autoref{eq:wave_intro} $y$, it follows that 
$h \in C^\infty_0(\Sigma)$:
\begin{equation}
\label{eq:adjoint_proof}
\begin{aligned} 
~& \int_\Sigma h L[f] \d S(x) dt = \int_\Sigma h y \d S(x) dt = \int_\Omega \kappa z'(0) f \d x \\
=& \int_\Omega D \left[D^{-1} \left[\kappa z'(0)\right]\right] f \d x = 
 \inner{i^* \left[\kappa {z'(0)}\right]}{f}_{1;\kappa,\rho}.
\end{aligned}
\end{equation}
\end{proof}

The following results on finite speed of propagation of the solution of \autoref{eq:wave_intro} 
are based on the results from \cite{Eva10}.
\begin{definition}
Let ${\tt d}(x):= \dist(x_0,x)$ be the distance between $x_0$ and $x$ with respect to the Riemannian metric $\kappa\rho \d x$. 
Note that this metric is chosen in accordance with the principal symbol of the elliptic operator
\begin{equation*}
 -\kappa^{-1}\nabla\cdot\left(\rho^{-1}\nabla\,\cdot\,\right)\;.
\end{equation*}
The cone $\cone$ with respect to the space-time point $(t_0,x_0)$ is defined as 
\begin{equation*}
\cone:=\set{(x,t):{\tt d}(x)< t_0-t}.
\end{equation*}
we further introduce
\begin{equation*}\cone_t:=\set{x:{\tt d}(x)<t_0-t},
\end{equation*}
the cross-section of $\cone$ at fixed time $t$.
\end{definition}
\begin{remark}
According to \cite[pp. 416]{Eva10} the function ${\tt d}$ solves the PDE
\begin{align}\label{eq:HamJac}
\frac{1}{\rho \kappa}|\nabla {\tt d}|^2 = 1, \quad 
{\tt d}(x_0) = 0.
\end{align}
This relation will be used in the proof of \autoref{thm:prop_speed} below.
\end{remark}

\begin{theorem}\label{thm:prop_speed}
Let $y$ be the solution of \autoref{eq:wave_intro}.
Assume $y' \equiv 0$ on $\cone_0$, then $y\equiv 0$ in $\cone$.
\end{theorem}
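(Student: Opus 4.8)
The plan is to run a standard energy/domain-of-dependence argument, adapted to the non-standard inner product $\inner{\cdot}{\cdot}_{1;\kappa,\rho}$ and to the fact that the relevant Riemannian metric here is $\kappa\rho\,\d x$. First I would define the localized energy inside the cone,
\begin{equation*}
e(t) := \frac{1}{2}\int_{\cone_t} \kappa\,y'^2 + \rho^{-1}\abs{\nabla y}^2 \,\d x,
\end{equation*}
and show it is non-increasing in $t$. Differentiating and using the coarea/Reynolds transport formula, the volume term produces $\int_{\cone_t}\kappa y'y'' + \rho^{-1}\nabla y\cdot\nabla y'\,\d x$, which after integrating the second summand by parts and using the PDE \autoref{eq:wave_intro} collapses to a boundary term $\int_{\partial\cone_t}\rho^{-1}y'\,\partial_{\bm n}y\,\d S$. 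The boundary term coming from the moving domain is $-\frac{1}{2}\int_{\partial\cone_t}(\kappa y'^2 + \rho^{-1}\abs{\nabla y}^2)\,v\,\d S$, where $v$ is the (Euclidean) normal speed of the shrinking front $\partial\cone_t$.

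The key step is to show $e'(t)\le 0$, i.e. that the moving-boundary term dominates the flux term pointwise on $\partial\cone_t$. From the eikonal equation \autoref{eq:HamJac}, $\frac{1}{\kappa\rho}\abs{\nabla{\tt d}}^2 = 1$, the level set $\set{{\tt d}(x)=t_0-t}$ advances inward with Euclidean normal speed $v = 1/\abs{\nabla{\tt d}} = (\kappa\rho)^{-1/2}$, and the Euclidean unit normal is $\bm n = -\nabla{\tt d}/\abs{\nabla{\tt d}}$. Then I estimate
\begin{equation*}
\abs{\rho^{-1}y'\,\partial_{\bm n}y} \le \rho^{-1}\abs{y'}\,\abs{\nabla y} \le \frac{1}{2}\Bigl(\sqrt{\tfrac{\kappa}{\rho^2\cdot\kappa\rho^{-1}}}\,\Bigr)\dots
\end{equation*}
— more cleanly, by Young's inequality with the weight $c=(\kappa\rho)^{-1/2}$,
\begin{equation*}
\rho^{-1}\abs{y'}\,\abs{\nabla y} \le \frac{1}{2}\Bigl(c\,\kappa\, y'^2 + c^{-1}\rho^{-2}\abs{\nabla y}^2\Bigr)
= \frac{1}{2}(\kappa\rho)^{-1/2}\Bigl(\kappa y'^2 + \rho^{-1}\abs{\nabla y}^2\Bigr),
\end{equation*}
using $c^{-1}\rho^{-2} = (\kappa\rho)^{1/2}\rho^{-2} = (\kappa\rho)^{-1/2}\rho^{-1}$. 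Since this matches exactly $v$ times the energy density, the flux term is bounded by the moving-boundary term and $e'(t)\le 0$. Hence $e(t)\le e(0)$ for all $t\in(0,t_0)$.

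Now I would use the hypothesis. By assumption $y'\equiv 0$ on $\cone_0$; I also need $\nabla y = 0$ there, which follows if one additionally knows $y$ itself vanishes on $\cone_0$ — this is the subtlety. In the photoacoustic setting $y(\cdot,0)=f$ with $\supp f\subseteq\Omega$, so the statement is really meant to be applied with $x_0\notin\supp f$ (so that $\cone_0$ avoids $\supp f$ and both $y(\cdot,0)$ and $y'(\cdot,0)=0$ vanish there); then $e(0)=0$, so $e(t)=0$ for all $t$, giving $\nabla y\equiv 0$ and $y'\equiv 0$ in $\cone$, and since $\cone$ is connected and $y$ vanishes on its base, $y\equiv 0$ in $\cone$. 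The main obstacle is this bookkeeping: making precise that ``$y'\equiv 0$ on $\cone_0$'' together with the initial-data structure forces $e(0)=0$, and handling the low regularity of weak solutions (the energy identity must be justified by a density/mollification argument, as in \cite[Appendix A]{BelGlaSch15_report}, since $y$ need not be smooth enough for the pointwise transport computation). A secondary technical point is that ${\tt d}$ is only Lipschitz in general, so the eikonal identity \autoref{eq:HamJac} and the normal-speed computation hold a.e., which is enough for the integral estimate but should be flagged.
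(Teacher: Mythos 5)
Your argument is essentially the paper's own proof: the same localized cone energy $E_{\cone_t}$, the same Leibniz/transport term for the shrinking cross-section, Green's formula reducing the volume term to a boundary flux, and the eikonal identity $\abs{\nabla {\tt d}}^2=\kappa\rho$ combined with Cauchy's (Young's) inequality to show the flux is dominated by the moving-boundary term, yielding $E_{\cone_t}'(t;y)\leq 0$. Your intermediate Young splitting has a small typo (the second weight should be $c^{-1}\kappa^{-1}\rho^{-2}$, not $c^{-1}\rho^{-2}$), but the displayed final inequality is correct, and your closing remarks --- that the stated hypothesis $y'\equiv 0$ on $\cone_0$ must be supplemented by the vanishing of $y(\cdot,0)$ there to force $E_{\cone_0}=0$, and that the energy identity needs justification for weak solutions --- address points the paper's proof leaves implicit (it stops at the monotonicity of the local energy).
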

\begin{proof}
Let
 \begin{align}\label{eq:local_energy}
 E_{\cone_t}(t;y):=\frac{1}{2}\int_{\cone_t} \kappa y'{}^2+\rho^{-1}\abs{\nabla y}^2 \d x
\end{align}
denote the local energy. 
Taking the time derivative of $E_{\cone_t}$, it follows by application of the Leibniz rule for the differentiation of evolving 
region integrals \cite[p.713]{Eva10} that 
\begin{equation*} E_{\cone_t}'(t;y)=\int_{\cone_t}\kappa y'y''+ \rho^{-1}\nabla y\cdot\nabla y' \d y +
   \frac12\int_{\partial \cone_t} \left(\kappa y'{}^2+\rho^{-1}\abs{\nabla y}^2\right)\bm v\cdot\bm n  \d S(x),
\end{equation*}
where $\bm v$ denotes the normal velocity of the moving boundary $\partial C_t$ and 
$\bm n$ is the outward pointing unit normal to $\cone_t$.
For a fixed point $x\in \cone_t$, we have  $\bm v(x)=\frac{\nabla {\tt d(x)}}{|\nabla {\tt d(x)}|^2}$ and $\bm n(x) = -\frac{\nabla {\tt d(x)}}{|\nabla {\tt d(x)}|}$.
Therefore,
\begin{equation}
 \label{eq:**}
\begin{aligned}
E_{\cone_t}'(t;y)&=\int_{\cone_t}\kappa y'y''+\rho^{-1}\nabla y\cdot\nabla y' \d y \\
             &\qquad -\frac12\int_{\partial \cone_t}\left(\kappa y'{}^2+\rho^{-1}\abs{\nabla y}^2\right)\frac{1}{|\nabla {\tt d}|}  \d S(x)=:A-B\;.
\end{aligned}
\end{equation}
By application of Green's formula, $A$ is rewritten to 
\begin{align*}
A=\int_{\cone_t}y'\left(\kappa y''-\nabla\cdot\left(\rho^{-1}\nabla y\right)\right)dx + 
  \int_{\partial \cone_t}\rho^{-1}y'\nabla y\cdot\frac{\nabla {\tt d}}{|\nabla {\tt d}|} \d S(x).
\end{align*}
The first term vanishes since $y$ is solution to \autoref{eq:wave_intro} and thus from \autoref{eq:HamJac} it follows that
\begin{align*}
|A| \leq 
\int_{\partial \cone_t} \frac{\abs{y'}\rho^{-1/2}\abs{\nabla {\tt d}}\abs{\nabla y}\rho^{-1/2}}{\abs{\nabla {\tt d}}} \d S(x)=
\int_{\partial \cone_t} \frac{\abs{y'}\kappa^{1/2}\abs{\nabla y}\rho^{-1/2}}{\abs{\nabla {\tt d}}} \d S(x)\;.
\end{align*}
Using Cauchy's inequality, we finally obtain \
\begin{align*}
|A|\leq \frac12\int_{\partial \cone_t}\left(\kappa y'{}^2+\rho^{-1}\abs{\nabla y}^2\right)\frac1{\abs{\nabla {\tt d}}}\d S(x) =B\;.
\end{align*}
Thus, from \autoref{eq:**} it follows that $E_{\cone_t}'(t;y)\leq 0.$
\end{proof}

\section{Photoacoustic Imaging}
\label{sec:pai}
The \emph{photoacoustic imaging problem} can be expressed as the solution of the operator equation:
\begin{equation}
\label{eq:operator}
 L[f] = m.
\end{equation}
Uniqueness of the solution of \autoref{eq:operator} is closely related to a unique continuation result 
for the wave equation:
\begin{theorem}[Unique Continuation \cite{Rob91,Hoe92,Tat95}]\label{thm:unique_cont}
Let $u$ be a solution to \autoref{eq:wave_intro}, and $y \equiv 0$ in $(-T,T)\times B_\ve(x_0)$. 
Then 
\begin{equation*}
 y\equiv 0 \text{ in } \set{(x,t)\in \R\times\R^n: \dist(x,x_0)\leq T-|t|}\;.
\end{equation*}
\end{theorem}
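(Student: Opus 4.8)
The plan is to derive the statement from the sharp unique continuation theorems quoted, \cite{Rob91,Hoe92,Tat95}, whose hypotheses are met here because the coefficients $\kappa,\rho$ of the wave operator $P:=\kappa\partial_t^2-\nabla\cdot(\rho^{-1}\nabla)$ are independent of $t$ and hence (trivially) real-analytic in the time variable. First I would put $P$ in geometric form: the principal symbol of $\kappa^{-1}P$ is $\tau^2-(\kappa\rho)^{-1}\abs{\xi}^2$, so $\kappa^{-1}P$ agrees with $\partial_t^2-\Delta_g$ up to terms of order $\le1$, where $\Delta_g$ is the Laplace--Beltrami operator of the metric $g=\kappa\rho\,\d x$; accordingly ${\tt d}(x)=\dist(x_0,x)$, the $g$-distance, solves the eikonal equation $\frac1{\rho\kappa}\abs{\nabla{\tt d}}^2=1$ recorded above. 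In these terms the assertion is exactly that the zero set of a solution of $Py=0$ on $\R^n\times(-T,T)$ fills the cone $\set{{\tt d}(x)<T-\abs{t}}$ as soon as it contains the thin cylinder $(-T,T)\times B_\ve(x_0)$ — the optimal domain of dependence, which is strictly stronger than the finite-speed-of-propagation statement \autoref{thm:prop_speed}.

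The analytic core is a \emph{local} step: if $Py=0$ near a point $z_0$ and $y$ vanishes on one side of a $C^1$ hypersurface through $z_0$, then $y$ vanishes in a full neighbourhood of $z_0$, provided the hypersurface satisfies a pseudoconvexity condition for $P$. For space-like hypersurfaces this is classical, and iterating it yields only the finite-speed cone; the point of \cite{Tat95}, refining \cite{Rob91,Hoe92}, is that \emph{analyticity of the coefficients in $t$} relaxes the pseudoconvexity requirement in the $t$-direction, so that suitable \emph{time-like} hypersurfaces become admissible as well. The proof of the local step is a Carleman estimate $\norm{e^{\tau\varphi}v}^2\le C\tau^{-1}\norm{e^{\tau\varphi}Pv}^2$ for large $\tau$ and a weight $\varphi$ adapted to the hypersurface, combined with an analytic-microlocal (FBI-transform) argument in $t$ that compensates for the missing convexity there.

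With the local step available I would \emph{propagate} the vanishing out of the cylinder. Foliate the cone by the hypersurfaces $\Sigma_s:=\set{{\tt d}(x)=s\,(T-\abs{t})}$, $s\in(0,1)$: a conormal of $\Sigma_s$ is $(\nabla{\tt d},s\,\sgn(t))$, on which the principal symbol of $\kappa^{-1}P$ equals $s^2-1<0$ by the eikonal equation, so the $\Sigma_s$ are \emph{time-like} — crossing them is not covered by the classical theory and is in general false without an analyticity hypothesis, and this is precisely where the $t$-independence of $\kappa,\rho$ is used via \cite{Tat95}. For $s$ small the region $\set{{\tt d}(x)<s(T-\abs{t})}$ lies inside $(-T,T)\times B_\ve(x_0)$, where $y=0$. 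A connectedness/continuity argument over $s\in(0,1]$ then finishes: the supremum of those $s$ up to whose level $y$ vanishes is attained (by continuity of $y$) and, were it $<1$, could be increased by applying the local step at every point of the corresponding $\Sigma_s$, where $y$ already vanishes on the inner side; hence it equals $1$, giving $y\equiv0$ on $\set{{\tt d}(x)<T-\abs{t}}$, and on its closure by continuity.

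The step I expect to be the main obstacle is the behaviour near the temporal ends $\abs{t}=T$, where the foliation $\Sigma_s$ degenerates to the characteristic light cone, $t=\pm T$ is the edge of the interval on which $Py=0$ is available, and the local step must be applied with extra care (or the surfaces slightly bent and truncated), the conclusion on the closed cone being then recovered by a limiting argument. A secondary technical point is that the Carleman and FBI estimates behind \cite{Tat95} must be run under the merely $C^1$ regularity of $\kappa,\rho$ assumed here, which is at the borderline of the usual Lipschitz-type hypotheses on the principal coefficients. Both are handled in \cite{Rob91,Hoe92,Tat95}, which is why the result is quoted rather than reproved here.
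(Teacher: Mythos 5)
The paper does not prove this statement at all: it is quoted verbatim (up to typos) from \cite{Rob91,Hoe92,Tat95} and used as a black box, so there is no in-paper argument to compare against. Your sketch is, as far as it goes, a faithful outline of how those references actually establish the result: reduction to the operator $\partial_t^2-\Delta_g$ modulo first-order terms for the metric $g=\kappa\rho\,\d x$, the Tataru--Robbiano--H\"ormander local unique continuation across non-characteristic (in particular time-like) hypersurfaces, which is where the $t$-independence (hence $t$-analyticity) of $\kappa,\rho$ enters via Carleman estimates with an FBI localization in $t$, and then the standard sweep of the cone by the foliation ${\tt d}(x)=s(T-\abs{t})$ with a connectedness argument in $s$. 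Your computation that these surfaces are non-characteristic (principal symbol $s^2-1<0$ on the conormal, by the eikonal equation \autoref{eq:HamJac}) is correct, and the two technical caveats you flag --- the degeneration of the foliation at $\abs{t}=T$ (and, you might add, the corner at $t=0$, usually removed by smoothing $\abs{t}$) and the borderline $C^1$ regularity of the coefficients --- are exactly the points one must take from the cited papers rather than re-derive. One small omission: \autoref{eq:wave_intro} is posed only for $t>0$, so to speak of $y$ on $(-T,T)$ one should first extend $y$ evenly in time, which is legitimate precisely because $y'(\cdot,0)=0$; this is implicit in the theorem statement but worth a sentence in any written-out proof.
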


Uniqueness of the solution of \autoref{eq:operator} requires a sufficiently large observation time
\begin{align}\label{eq:T0}
T_0:=\max\limits_{x\in\supp f} \dist(x,S),
\end{align}
where $\dist(x,S)$ is the distance of $x$ to the closest point $x'\in S$
with respect to the Riemannian metric $\kappa\rho \d x$. 
\begin{corollary}[Injectivity of $L$ \cite{SteUhl09}]\label{cor:inj}
Let $\Omega$ be strictly convex and $T>T_0$. Moreover, let $\supp f\subset \Omega$. Then $L[f]=0$ implies $f=0$. 
\end{corollary}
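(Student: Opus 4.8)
The plan is to feed the unique continuation principle \autoref{thm:unique_cont} with a space--time open set on which the solution vanishes; such a set is manufactured by reflecting the solution in time and then pushing the Dirichlet vanishing on $\Sigma=S\times(0,T)$ into the exterior of $\Omega$, where the coefficients are constant. First, since $y'(\cdot,0)=0$, the even extension $\bar y(x,t):=y(x,|t|)$ is a weak solution of \autoref{eq:wave_intro} on $\R^n\times(-T,T)$ (it satisfies the same initial data), and $L[f]=0$ means exactly that $\bar y\equiv0$ on $S\times(-T,T)$. Because $\supp f\subset\Omega$, the Cauchy data of $\bar y$ at $t=0$ vanishes on $\mathcal{C}\Omega$; and since $\kappa,\rho$ are constant there with $\kappa\rho=1$, on the exterior $\mathcal{C}\Omega\times(-T,T)$ the function $\bar y$ solves the standard constant--coefficient wave equation.

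Next I push the vanishing into the exterior; this is where strict convexity of $\Omega$ enters. Fix a relatively interior point $x'\in S$ and a small ball $B$ around $x'$ with $B\cap\partial\Omega\subset S$. For $x\in B\setminus\overline{\Omega}$ and $|t|$ small, the exterior domain of dependence of $(x,t)$ reaches $\{t=0\}$ only within $B\setminus\overline{\Omega}$, where the Cauchy data of $\bar y$ vanishes, and reaches $\partial\Omega$ only within $B\cap\partial\Omega\subset S$, where $\bar y=0$; hence by a finite--speed--of--propagation argument (in the spirit of \autoref{thm:prop_speed}, applied to $\bar y$ and to a cone whose lateral boundary partly lies on $\partial\Omega$, where $\bar y=0$) the function $\bar y$ vanishes on a nonempty open set $V\times(-\tau,\tau)\subset\mathcal{C}\Omega\times(-T,T)$. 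Since the exterior coefficients are constant, hence real--analytic, and $\mathcal{C}\Omega$ is connected ($\Omega$ being convex), Holmgren/Tataru unique continuation inside the exterior enlarges this zero set; strict convexity of $\Omega$ is precisely what lets one control the exterior bicharacteristics --- no grazing or trapped rays transport the nonzero boundary data of $\bar y$ on $\partial\Omega\setminus S$ into the relevant region in the available time --- and one concludes, as in \cite{SteUhl09}, that $\bar y$ vanishes on an exterior open set $\omega$ that accumulates at every point of $\overline{S}$, over a time interval $(-T',T')$ with $T_0<T'\le T$.

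Now I invoke \autoref{thm:unique_cont}. Fix $x\in\supp f$, let $x'\in\overline{S}$ realize $\dist(x,x')=\dist(x,S)$, and pick $x_0\in\omega$ with $\dist(x',x_0)<T'-\dist(x,S)$, which is possible since $\dist(x,S)\le T_0<T'$ and $\omega$ accumulates at $x'$. Choosing $\varepsilon>0$ with $B_\varepsilon(x_0)\subset\omega$ we have $\bar y\equiv0$ on $(-T',T')\times B_\varepsilon(x_0)$, so \autoref{thm:unique_cont} yields $\bar y\equiv0$ on $\{(z,t):\dist(z,x_0)\le T'-|t|\}$; at $t=0$ this gives $f=\bar y(\cdot,0)\equiv0$ on $\{z:\dist(z,x_0)\le T'\}$. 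Since
$\dist(x,x_0)\le\dist(x,x')+\dist(x',x_0)<\dist(x,S)+\bigl(T'-\dist(x,S)\bigr)=T'$,
the point $x$ lies in this set, so $f(x)=0$. As $x\in\supp f$ was arbitrary, $f\equiv0$.

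The routine parts are the first and third steps (even reflection, finite speed of propagation, and a covering argument calibrated to the sharp time $T_0$ of \autoref{eq:T0}). The genuine difficulty is the second step: upgrading the one--sided Dirichlet vanishing of $y$ on the \emph{relatively open} piece $S$ to vanishing of $\bar y$ on an exterior open set that accumulates along all of $S$ and persists over essentially the whole interval $(-T,T)$. This is exactly where strict convexity of $\Omega$ and the choice of $T_0$ via the distance to $S$ are indispensable, and it is the ingredient we import from \cite{SteUhl09}.
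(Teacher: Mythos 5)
First, a caveat on the comparison itself: the paper does not prove this corollary at all --- it is stated as a direct import from \cite{SteUhl09}, with \autoref{thm:unique_cont} and the definition \autoref{eq:T0} of $T_0$ serving only as context. So your attempt can only be judged against the strategy of the cited source, which is indeed the one you follow (even reflection in time, vanishing of the extended solution on an exterior set near $S$, Tataru's unique continuation, triangle inequality against $T_0$). Your first and third steps are fine: the even extension is legitimate because $y'(\cdot,0)=0$, and the final covering argument --- choosing $x_0$ near the foot point $x'\in\overline{S}$ of $x$ so that $\dist(x,x_0)<T'$ --- correctly converts the conclusion of \autoref{thm:unique_cont} into $f(x)=0$ whenever $\dist(x,S)<T'$.

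The genuine gap is in your second step, and it is not cosmetic. A domain-of-dependence argument for the exterior mixed problem (zero Cauchy data in $\mathcal{C}\Omega$ at $t=0$, zero Dirichlet data on $S\times(-T,T)$, \emph{unknown} Dirichlet data on $(\partial\Omega\setminus S)\times(-T,T)$) only yields $\bar y=0$ on sets of the form $\set{(x,t):x\in V,\ |t|<\tau}$ with $\tau$ bounded by the exterior distance from $V$ to $\partial\Omega\setminus S$. It cannot yield vanishing on $\omega\times(-T',T')$ with $T'>T_0$: the unknown boundary values on $\partial\Omega\setminus S$ influence every fixed exterior neighbourhood of $S$ after a time comparable to that exterior distance, which is in general far smaller than $T_0$, and strict convexity does not prevent this --- a straight exterior ray (the coefficients are constant outside $\Omega$) from a point just outside $S$ reaches $\partial\Omega\setminus S$ without grazing or trapping. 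Feeding the set you actually obtain into \autoref{thm:unique_cont} therefore gives $f=0$ only in a ball of radius $\tau$, not $T'$. The route in \cite{SteUhl09} is essentially different at this point: the short-time exterior vanishing is used only to conclude that $\bar y$ has vanishing \emph{Cauchy} data ($\bar y=\partial\bar y/\partial\bm n=0$) on $S$ near $t=0$, and the zero set is then spread along $S\times(-T,T)$ and into $\Omega$ by an iterated application of unique continuation from lateral Cauchy data, losing time only proportionally to distance, which culminates in $\bar y=0$ on the double cone $\set{(x,t):\dist(x,S)<T-|t|}$. Since you explicitly defer exactly this step to the citation, your write-up is a correct skeleton of the argument but does not close its central difficulty.
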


In the following we discuss several numerical algorithms for solving \autoref{eq:operator}.

\subsection{Landweber Iteration}
\label{sec:Landweber}
We are employing the Landweber's iteration for solving \autoref{eq:operator} and  compare it with 
 the time reversal methods presented in \autoref{subsec:TR}, which are the standard references in this field. 
More efficient iterative regularization algorithms are at hand \cite{Han95}, but these are less 
intuitive to be compared with time reversal. The Landweber algorithm reads as follows:
\begin{align}\label{eq:Landweber}
f_0:=0 \quad \text{and} \quad f_k^\delta=f_{k-1}^\delta-\omega L^*[L[f_{k-1}^\delta]-m^\delta],\quad k=1,2,\dots,
\end{align}
where $m^\delta$ stands for error-prone data with $\norm{m^\delta-m}_{L^2(\Sigma)}<\delta$, where $m=y|_\Sigma$.
For a summary of results for Landweber regularization in Photoacoustics we refer to \cite{BelGlaSch15_report} 
and the references therein.

We emphasize that Landweber's iteration converges to the \emph{minimum norm solution}
\begin{equation}
\label{eq:mp}
 f^\dagger = L^\dagger[m]\,,
\end{equation}
where $L^\dagger$ is the  Moore-Penrose inverse  (see \cite{Nas76} for a survey), 
if the data $m$ is an element of the range of $L$. This is a property which is relevant when the observation time $T$ 
is smaller than the critical time $T_0$ which guarantees injectivity of $L$.

\subsection{Time Reversal}\label{subsec:TR}

In this subsection, we first state the conventional time reversal and give a remark on necessary assumptions to obtain 
error estimates for this method. 
This is followed by a description of a modified time reversal approach, for which a theoretical analysis based on \cite{SteUhl09} can be 
provided. 

\subsubsection*{Conventional Time Reversal}\label{subsubsec:TR_TC}
We formally define the time reversal operator:
\begin{align}\label{eq:TR_operator}
\overline{L}[h] = z(\cdot,0),
\end{align} 
where $z$ is a solution of 
\begin{align}\label{eq:TR}
 \begin{aligned}
         \kappa z''- \nabla\cdot(\rho^{-1}\nabla z) &=0 \text{ in } \Omega \times (0,T),\\
         z(T) = z'(T) &=0  \text{ in } \Omega , \\         
         z  &= h \text{ on } \partial \Omega \times (0,T),\\
\end{aligned}
\end{align}
The fundamental difference between $\overline{L}$ and $L^*$ is that they are defined via differential equations 
on $\Omega \times (0,T)$ and $\R^n \times (0,T)$.
The conventional time reversal reconstruction \cite{TreCox10} consists of computing
\begin{equation}\label{eq:TR_TC}
f_\text{TR}=\overline{L}[m]\,.
\end{equation} 

\begin{remark}
Assume that $S=\partial \Omega$, that $\rho\equiv 1$ and the speed of sound is {non-trapping}.
For this case, Hristova \cite[Theorem 2]{Hri09} provides an error estimate for the time reversal method, 
employing results on the decay of solutions of the wave equation (e.g. \cite{Vai75}).
\end{remark}

\subsubsection*{Neumann Series}\label{subsubsec:TR_SU}

Stefanov and Uhlmann \cite{SteUhl09} define the modified time reversal for \autoref{eq:wave_intro}:
Rather than assuming (in most cases unjustified) the initial data $z(T) \equiv 0$ we are again using the harmonic 
extension of the data term $h(s,T)$, for $s \in \partial \Omega$, as initial datum at $T$.
That is, for 
\begin{equation*}
 - \nabla\cdot(\rho^{-1}\nabla\phi) = 0 \text{ in } \Omega, \text{ with } \phi(\cdot) = m(\cdot,T) \text{ on } \partial \Omega
\end{equation*}
the modified time reversal operator 
\begin{align}\label{eq:TR_operator_Uhl}
\widetilde{\overline{L}}[h] = z(\cdot,0)
\end{align} 
is defined by the solution of equation 
\begin{align}\label{eq:TR_Uhl}
 \begin{aligned}
         \kappa z''- \nabla\cdot(\rho^{-1} z) &=0 \text{ in } \Omega \times (0,T),\\
          z(T) = \phi,\quad z'(T) &=0 \text{ in } \Omega , \\
        z  &= h \text{ on } \partial \Omega\times(0,T).\\
\end{aligned}
\end{align}
Note that this algorithm has not been used as a basis for numerical reconstruction for the generalized 
\autoref{eq:wave_intro}.

Previously \cite{SteUhl09} showed stability of the modified time reversal reconstruction under non-trapping 
conditions and for sufficiently large measurement time for \autoref{eq:wave_old}.
In fact, let $c=(\kappa \rho)^{-1/2}$ be non-trapping \cite{HriKucNgu08} and 
$T_1$ denote the time when all
singularities have left $\overline{\Omega}$. Then the result is directly 
convertible to \autoref{eq:wave_intro}:
\begin{theorem}[Stability of Modified Time Reversal]\cite[Theorem 1]{SteUhl09}\label{thm:stability1}
Let $T>T_1$ and $S=\partial \Omega$ be a closed $C^2$-surface. 
Moreover, let the coefficients $\kappa$,~$\rho \in C^\infty(\R^n)$. 

Then $\widetilde{\overline{L}} L  = Id - K$, where $K$ is a compact 
operator from $H^1_0(\Omega) \to H^1_0(\Omega)$ satisfying $\norm{K}<1$.
\end{theorem}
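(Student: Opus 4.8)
The plan is to reduce the stated theorem to the corresponding result in \cite{SteUhl09} by a change of variables and some bookkeeping. The key observation is that \autoref{eq:wave_intro} and \autoref{eq:wave_old} are intertwined by a multiplication operator: if $y$ solves \autoref{eq:wave_intro} with data $f$, then $w:=\rho^{-1/2}y$ (or, more precisely, a suitable density-dependent rescaling as in \cite[Chapter 8.1]{ColKre92}) solves a wave equation of the form $w'' - c^2\Delta w + q\,w = 0$ with $c=(\kappa\rho)^{-1/2}$ and a zeroth-order potential $q$ built from $\rho$ and its derivatives, with initial data $\rho^{-1/2}f$. Since \cite[Theorem 1]{SteUhl09} is proved for the general operator $\partial_t^2 + a(x,D)$ with $a$ a second-order elliptic operator with smooth coefficients — and in particular covers lower-order terms — the transmission/time-reversal construction and the resulting identity $\widetilde{\overline L}L = \mathrm{Id} - K$ transfer verbatim once one checks that the modified time reversal operator \autoref{eq:TR_operator_Uhl} for \autoref{eq:wave_intro} is conjugated to the one from \cite{SteUhl09} under the same rescaling. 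First I would make this conjugation explicit on the level of the operators $L$, $\widetilde{\overline L}$, and the harmonic extension step, noting that the Dirichlet harmonic extension for $-\nabla\cdot(\rho^{-1}\nabla\,\cdot\,)$ corresponds under the rescaling to the harmonic extension used in \cite{SteUhl09}.

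The main structural steps are then as follows. First, identify the non-trapping hypothesis: the relevant geodesic flow is that of the metric $\kappa\rho\,dx$ (equivalently, the Hamiltonian flow of $c^2|\xi|^2$ with $c=(\kappa\rho)^{-1/2}$), which is exactly the metric already singled out in the excerpt in the definition of ${\tt d}$ and the cone $\cone$; non-trapping of $c$ in the sense of \cite{HriKucNgu08} is the statement that all geodesics leave any compact set, and $T_1$ is the time after which all singularities originating in $\overline\Omega$ have exited $\overline\Omega$. Second, invoke finite speed of propagation (\autoref{thm:prop_speed}) together with the unique continuation result (\autoref{thm:unique_cont}) to control the behaviour of solutions; these are the ingredients that make the error operator $K$ smoothing. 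Third, transport the norm estimate: \cite{SteUhl09} shows $\|K\|<1$ in the energy norm associated to the Dirichlet form, and the equivalence of norms \autoref{eq:equiv2} (and the fact that $\kappa\rho\equiv 1$ outside $\Omega$, so the rescaling is trivial near $\partial\Omega$) guarantees that $\|K\|<1$ is preserved for the operator $K$ on $H^1_0(\Omega)$ equipped with $\langle\cdot,\cdot\rangle_{1;\kappa,\rho}$. Compactness of $K$ follows because, after the subtraction $\widetilde{\overline L}L - \mathrm{Id}$, the remaining operator gains regularity (one derivative, by the microlocal analysis of \cite{SteUhl09} combined with \autoref{rem:opH1}) and the embedding $H^1\hookrightarrow L^2$ on the relevant compact sets is compact; alternatively one cites that $K$ is an FIO of negative order.

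The hard part, or at least the part requiring the most care, is verifying that the rescaling $y\mapsto\rho^{-1/2}y$ genuinely produces an equation to which \cite[Theorem 1]{SteUhl09} applies \emph{with the same time reversal operator}. One must check three compatibilities: (i) the zeroth-order term $q$ generated by the rescaling does not destroy the energy estimates or the non-trapping structure (it does not, since $q$ is bounded and the principal symbol is unchanged, and \cite{SteUhl09} explicitly allows lower-order terms); (ii) the boundary condition $z=h$ on $\partial\Omega\times(0,T)$ and the harmonic-extension initial datum are mapped to the corresponding objects in \cite{SteUhl09} — here the assumption $\kappa\rho\equiv 1$ in $\mathcal C\Omega$ and $\rho$ constant there makes the rescaling the identity in a neighbourhood of $\partial\Omega$, so traces and the elliptic extension problem match exactly; and (iii) the time $T_1$ is defined via the same flow. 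Once these are in place the conclusion is immediate. I would therefore organize the proof as: state the rescaling and the conjugation identities $L = M L_{\mathrm{SU}} M^{-1}$, $\widetilde{\overline L} = M \widetilde{\overline L}_{\mathrm{SU}} M^{-1}$ with $M$ the (locally trivial) multiplication operator; observe that $M$ restricts to a bounded invertible map of $H^1_0(\Omega)$ that is an isometry near $\partial\Omega$; deduce $\widetilde{\overline L}L = M(\mathrm{Id} - K_{\mathrm{SU}})M^{-1} = \mathrm{Id} - K$ with $K := M K_{\mathrm{SU}} M^{-1}$; and finally transfer compactness and $\|K\|<1$ using \autoref{eq:equiv2} and the similarity transformation. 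The only genuinely new verification beyond citation is the conjugation computation, which is a direct but slightly tedious manipulation of \autoref{eq:wave_intro}.
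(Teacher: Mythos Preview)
The paper offers no independent proof of this theorem: it is stated as a direct citation of \cite[Theorem~1]{SteUhl09}, together with the observation (already made in the introduction) that the divergence-form operator $-\kappa^{-1}\nabla\cdot(\rho^{-1}\nabla\,\cdot\,)$ is a special case of the general second-order elliptic operator $a(x,D)$ treated in \cite{SteUhl09}. Thus the result of Stefanov--Uhlmann applies to \autoref{eq:wave_intro} as written, with no transformation required; the sentence ``the result is directly convertible to \autoref{eq:wave_intro}'' is the entire argument.

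Your conjugation route is therefore an unnecessary detour. You yourself note that \cite[Theorem~1]{SteUhl09} is proved for $\partial_t^2 + a(x,D)$ with $a$ general elliptic; but then the divergence-form operator already qualifies, and there is no need to first reduce to $w'' - c^2\Delta w + qw = 0$ via a Liouville substitution. The paper's approach buys simplicity: one checks once that the operator fits the framework and is done. Your approach would, if completed, give an explicit link between the two models, which has some expository value but no logical necessity here.

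More importantly, your sketch contains a genuine gap in the transfer of $\|K\|<1$. Writing $K = M K_{\mathrm{SU}} M^{-1}$ and invoking the norm equivalence \autoref{eq:equiv2} does \emph{not} yield $\|K\|<1$: similarity by a non-isometry can inflate operator norms, and equivalence of norms only gives $\|K\| \le \|M\|\,\|K_{\mathrm{SU}}\|\,\|M^{-1}\|$, which may well exceed $1$. To rescue the argument you would have to show that the multiplication $M=\rho^{1/2}$ is actually an \emph{isometry} from the energy space of the transformed equation onto $(H^1_0(\Omega),\|\cdot\|_{1;\kappa,\rho})$ --- which is plausible, since the Liouville transform is designed to match energy functionals, but it is not what you argued. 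A smaller inaccuracy: the hypothesis that $\rho$ is constant in $\mathcal C\Omega$ gives, by $C^1$-continuity, only that $\rho$ is constant \emph{on} $\partial\Omega$, not in an interior neighbourhood of it; your boundary-matching still works (the rescaling on $\partial\Omega$ is multiplication by a fixed constant), but the phrasing ``identity in a neighbourhood of $\partial\Omega$'' is not justified by the stated assumptions.
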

Note that \cite[Theorem 1]{SteUhl09} works only for complete boundary measurement data.

By \autoref{thm:stability1}, the initial value $f$ can be expanded into the Neumann series
\begin{align}\label{eq:Neumann2}
f=\sum_{j=0}^\infty K^j[m].
\end{align}
By induction one sees that the $m$-th iterate can be written as
\begin{align}\label{eq:Neumann3}
f_k=f_{k-1}-\widetilde{\overline{L}}[L[f_{k-1}] - m],
\end{align}
where
\begin{equation*}
f_k = \sum_{j=0}^k K^j [m].
\end{equation*}
\begin{remark}
Note that with partial data, the Neumann series reconstruction consists in formally applying \autoref{eq:Neumann2}
to the extended data $m_s$, where
\begin{equation}\label{eq:data_ext}
m_s=m \text{ in }\Sigma'\subset\subset\Sigma \text{ and } m_s=0 \text{ in } \partial\Omega\setminus\Sigma\times(0,T),
\end{equation}
in a way that $m_s\in H^1(\partial\Omega\times[0,T])$ serves as approximation to $m$ in $\Sigma$.
\end{remark}

Let now $S \subset \partial \Omega$ relatively open. 
To fix the terminology, we define for $(x,\xi)\in \R^n\times \mathbb{S}^{n-1}$ the curve 
\begin{equation*}
\gamma_{(x,\xi)}(\tau)
\end{equation*} 
to be the geodesic (in the Riemannian metric $\kappa \rho \d x$) through $x$ in direction $\xi$, where $\tau\in(-\infty,\infty)$ and $\gamma_{(x,\xi)}(0)=x$. 
Note that
\begin{equation*}
\tau_{\pm}(x,\xi)=\max\set{\tau\geq 0 : \gamma_{(x,\xi)}(\pm\tau)\in\overline{\Omega}}.
\end{equation*}
give the (possibly infinite) times when the geodesics leave the domain in positive ($\sgn=+$) resp. negative ($\sgn=-$) direction.
Regarding stability \cite[Theorem 3]{SteUhl09} provides the following result, which holds also in the partial data case.
\begin{theorem}\label{thm:stability2}
Assume that
\begin{equation*}(\tau_\text{sgn}(x,\xi),\gamma_{(x,\xi)}(\tau_\text{sgn}(x,\xi))\in\Sigma\end{equation*}
holds for all $(x,\xi)\in \Omega_1 \times \mathbb{S}^{n-1}$ in at least one of the two directions $\sgn=+$, $\sgn=-$.
Then there exists a constant $C>0$ such that
for any $f\in C_0^\infty(\Omega_1)$, with $\Omega_1\subset\subset\Omega$, the estimate
\begin{align*}
\norm{f}_{H^1_0(\Omega_1)}\leq C\norm{Lf}_{H^1(\Sigma)}
\end{align*}
is valid.  
\end{theorem}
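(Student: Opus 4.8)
The plan is to obtain the estimate as the specialization to \autoref{eq:wave_intro} of \cite[Theorem 3]{SteUhl09}: the operator $\kappa\partial_t^2-\nabla\cdot(\rho^{-1}\nabla\,\cdot\,)$ paired with the Riemannian metric $\kappa\rho\,\rmd x$ is a particular case of the general second order hyperbolic operator treated there, so one only has to transcribe the structure of their microlocal argument. The strategy splits into two parts: first establish a stability estimate with a compact (smoothing) remainder, then remove the remainder using a uniqueness statement together with a compactness argument. Note that the full-data identity $\widetilde{\overline L}L=Id-K$ of \autoref{thm:stability1} is not available here, since $S$ is only a relatively open subset of $\partial\Omega$, so the genuinely microlocal route seems unavoidable.

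For the first part I would use that the solution operator $f\mapsto y$ of \autoref{eq:wave_intro} is, microlocally, a Fourier integral operator associated with the geodesic flow of $\kappa\rho\,\rmd x$: a singularity of $f$ at $(x,\xi)\in T^*\Omega_1\setminus 0$ propagates in both time directions along the bicharacteristic lying over $\tau\mapsto\gamma_{(x,\xi)}(\tau)$. Restricting $y$ to $\partial\Omega\times(0,T)$ and composing with a microlocalized back-projection (modelled on the modified time reversal $\widetilde{\overline L}$ of \autoref{eq:TR_operator_Uhl}), the visibility hypothesis guarantees that every $(x,\xi)\in\Omega_1\times\mathbb{S}^{n-1}$ has a bicharacteristic reaching $\Sigma$ transversally at time $\tau_{\mathrm{sgn}}(x,\xi)<T$ in at least one of the two directions; choosing a microlocal partition of unity subordinate to that choice of direction, the composite operator becomes a pseudodifferential operator of order $0$ on $H^1_0(\Omega_1)$ that is elliptic on all of $T^*\Omega_1\setminus 0$ — the Sobolev bookkeeping works out because the two composed half-order-smoothing Fourier integral operators map $H^1$ to $H^1$ once the $H^1(\Sigma)$ norm is used on the data side. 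Elliptic regularity then yields, for some $s<1$,
\begin{equation}\label{eq:stab_compact}
\norm{f}_{H^1_0(\Omega_1)}\leq C\norm{Lf}_{H^1(\Sigma)}+C\norm{f}_{H^{s}(\Omega_1)},\qquad f\in C_0^\infty(\Omega_1).
\end{equation}

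To remove the last term I would first note that the visibility hypothesis forces $\dist(x,S)\leq\tau_{\mathrm{sgn}}(x,\xi)<T$ for every $x\in\Omega_1$, so $T>\sup_{x\in\overline{\Omega_1}}\dist(x,S)$; hence, if $f$ in the $H^1_0$-closure of $C_0^\infty(\Omega_1)$ satisfies $Lf=0$, then \autoref{thm:unique_cont} applied at each $x\in\overline{\Omega_1}$ gives $y\equiv 0$ near $\{0\}\times\overline{\Omega_1}$, i.e. $f=y(\cdot,0)=0$; thus $L$ is injective there. A standard compactness argument then upgrades \autoref{eq:stab_compact}: if it failed one could take $f_k$ with $\norm{f_k}_{H^1_0(\Omega_1)}=1$ and $\norm{Lf_k}_{H^1(\Sigma)}\to 0$, extract (by the compact embedding $H^1_0(\Omega_1)\hookrightarrow H^{s}(\Omega_1)$) a subsequence converging in $H^s(\Omega_1)$, deduce from \autoref{eq:stab_compact} applied to differences that it is Cauchy, hence convergent, in $H^1_0(\Omega_1)$, and reach a contradiction since the limit would have unit norm and lie in the kernel of $L$.

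The main obstacle is the microlocal step: verifying that the canonical relation of the boundary-restricted wave propagator is well enough behaved (a local canonical graph away from glancing rays) that the back-projected operator is genuinely a pseudodifferential operator elliptic on the full visible region, and getting the cutoffs and the partition into the two geodesic directions to fit together consistently — including a careful treatment of rays that graze $\partial\Omega$ or leave through $\partial\Omega\setminus S$, which are precisely the directions that must be excluded by requiring $\supp f\subset\Omega_1$ with the visibility condition holding for all $(x,\xi)\in\Omega_1\times\mathbb{S}^{n-1}$. Once that is in place, the passage from \autoref{eq:stab_compact} to the stated estimate is routine functional analysis.
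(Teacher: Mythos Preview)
The paper does not give its own proof of this theorem but simply cites it as \cite[Theorem~3]{SteUhl09}, noting that the general operator there covers \autoref{eq:wave_intro}. Your proposal is exactly a sketch of that cited argument --- propagation of singularities along the geodesic flow of $\kappa\rho\,\rmd x$, a microlocalized back-projection yielding an elliptic pseudodifferential operator under the visibility hypothesis, the resulting estimate with a lower-order remainder, and its removal via unique continuation plus the standard compactness contradiction --- so your approach coincides with what the paper invokes.
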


\section{Numerical Experiments and Results}
\label{sec:results}
We compare conventional time reversal \autoref{eq:TR_TC}, the Neumann series approach \autoref{eq:Neumann3} and the Landweber iteration 
\autoref{eq:Landweber} for the photoacoustic imaging problem based on 
\autoref{eq:wave_intro}. Similar studies have been performed in \cite{BelGlaSch15_report} for photoacoustic imaging 
based on \autoref{eq:wave_old}. 
\autoref{fig:parameters} displays the involved parameters, including absorption density $f$, 
material compressibility $\kappa$ and density $\rho$.

For the numerical solution of the involved wave equations \autoref{eq:wave_intro} and \autoref{eq:wave_adj}, we use a 
straight-forward adaptation of the BEM-FEM scheme outlined in \cite{BelGlaSch15_report}. 
\autoref{eq:TR} and \autoref{eq:TR_Uhl} are discretized by finite element discretization.

For the simulation of the data, for 
all different wave equations, the mesh size has been chosen as $\Delta x =0.0095$ and $\Delta t= \Delta x/(15*c_{\max})$, 
leading to about $40000$ nodal points in $\Omega$.
For all wave equations involved in reconstruction, we use a grid with $\Delta x_R = 0.01$ and $\Delta t_R=\Delta x_R/(15*c_{\max})$.

Measurement data are assumed to be recorded on $630$ detection points on the unit circle. 
In the partial data example, the measurements are restricted to the lower half of the circle.

The total measurement time was varied as multiples of $T_0$, which is defined as in \autoref{eq:T0}.
\begin{figure}[ht]\center
\includegraphics[width=0.3\textwidth]{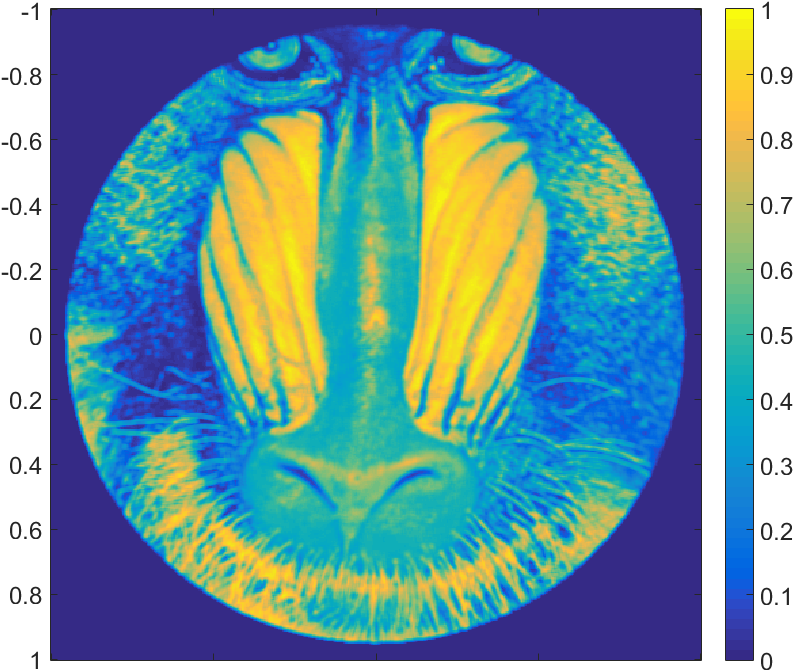}
\includegraphics[width=0.3\textwidth]{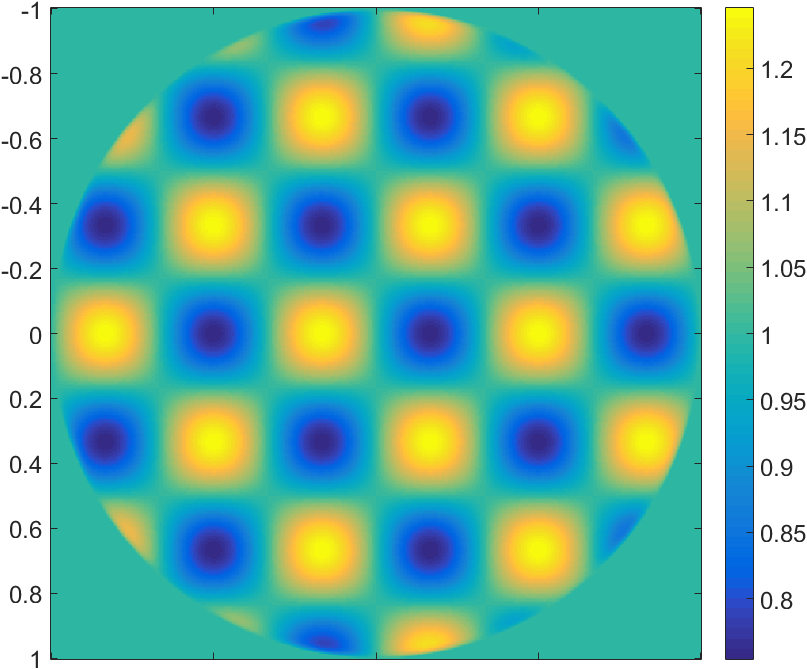}
\includegraphics[width=0.3\textwidth]{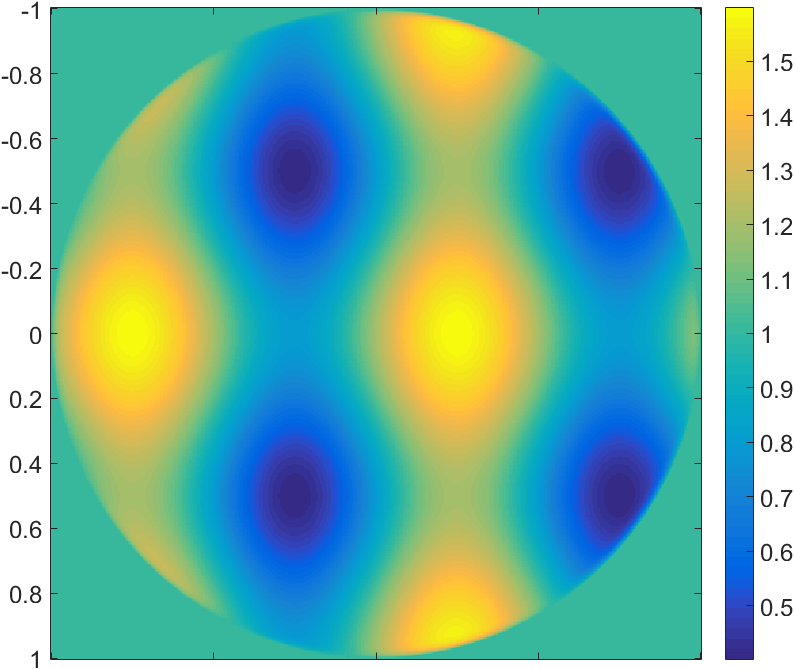}
\\
\includegraphics[width=0.3\textwidth]{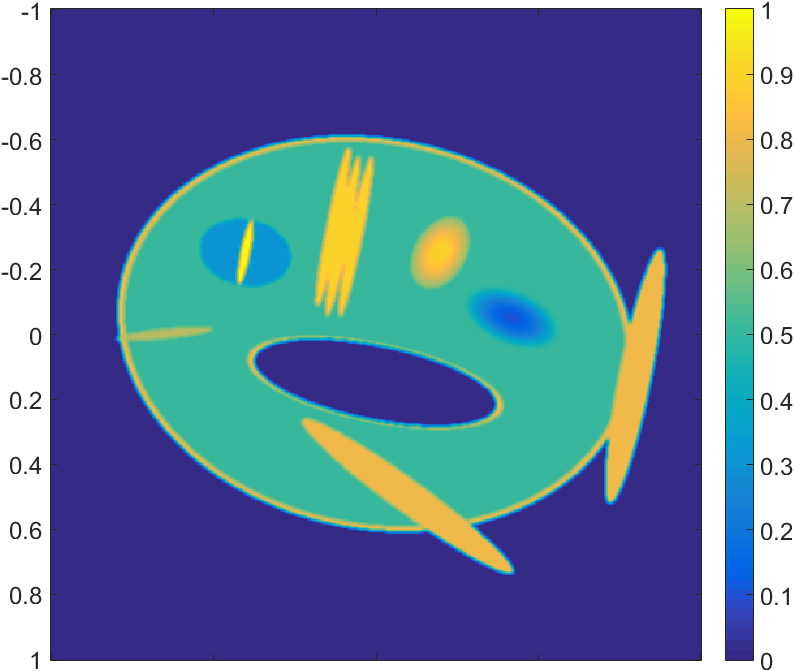}
\includegraphics[width=0.3\textwidth]{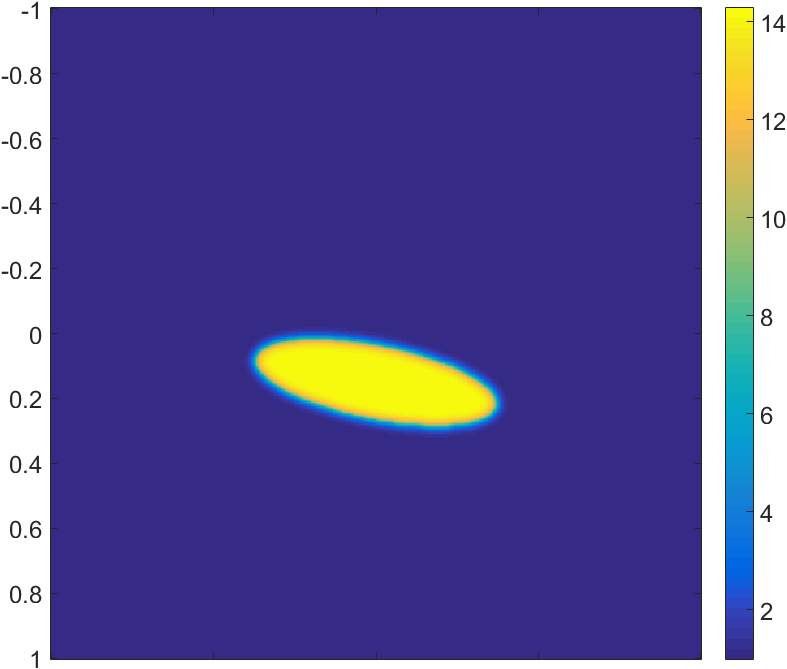}
\includegraphics[width=0.3\textwidth]{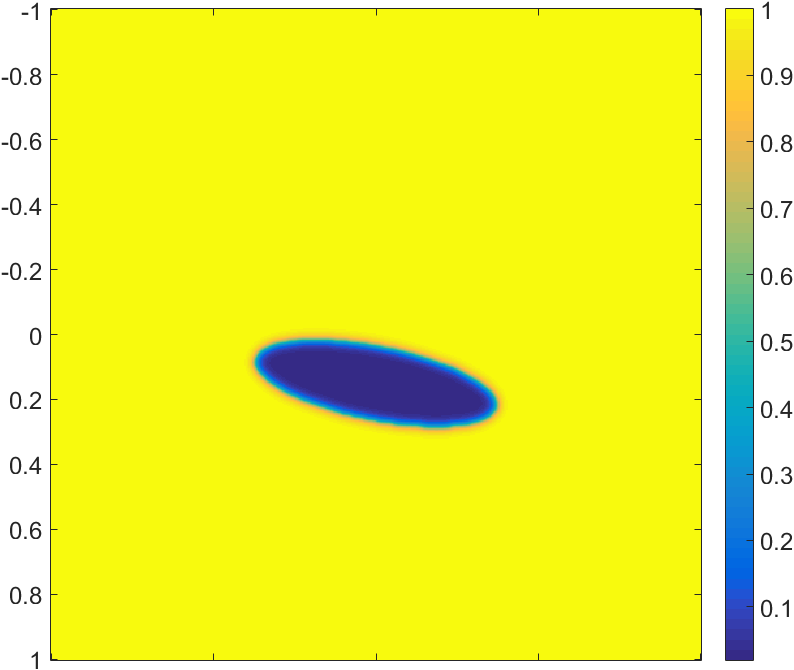}
\caption{Parameters: 
\textbf{First line:} Test Example 1 -- \emph{Mandrill}.
\textbf{Second line:} Test example 2 -- \emph{Fish}.
\textbf{From left to right:} Initial pressure $f$, compressibility $\kappa$, density $\rho$.}
\label{fig:parameters}
\end{figure}

\subsection{Test Example 1 -- \emph{Mandrill}}
We investigate the performance of the different photoacoustic imaging techniques, time reversal, Neumann series and 
Landweber iteration, respectively, with spatially varying $\kappa$ and $\rho$.
We show different test cases
\begin{enumerate}[label=\alph*]
\item
for partial measurement data (see \autoref{fig:halfcircle}), where we show the imaging results for $T=2T_0$ and $T=4T_0$,
where $T_0$, defined in \autoref{eq:T0} denotes the minimal time that guarantees unique reconstruction of the absorption density $f$
\item
as well as reconstructions from full data in the presence of noise (\autoref{fig:noise}).
The noise level is stated as SNR (signal to noise ratio, in dB scale) with respect to the maximum signal value.
The examples include moderate noise (SNR $=10$dB) and high noise (SNR $=5$dB). 
\end{enumerate}
\begin{figure}[ht]\center
\includegraphics[width=0.3\textwidth]{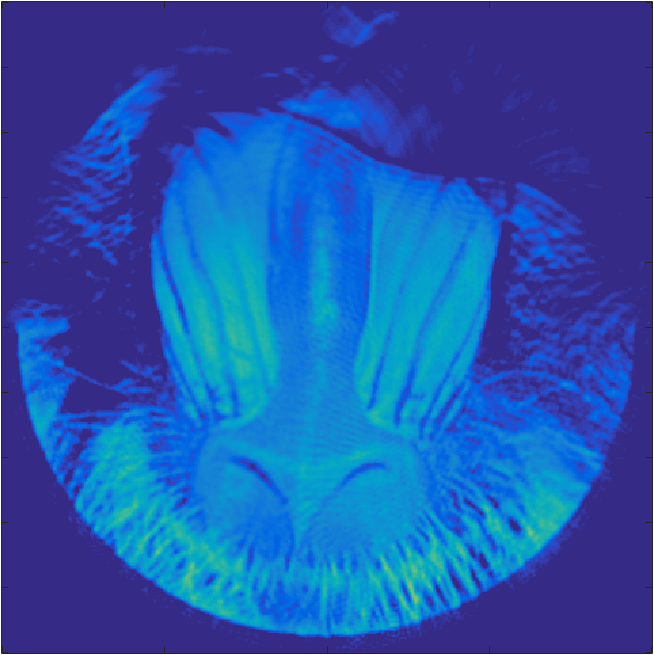}
\includegraphics[width=0.3\textwidth]{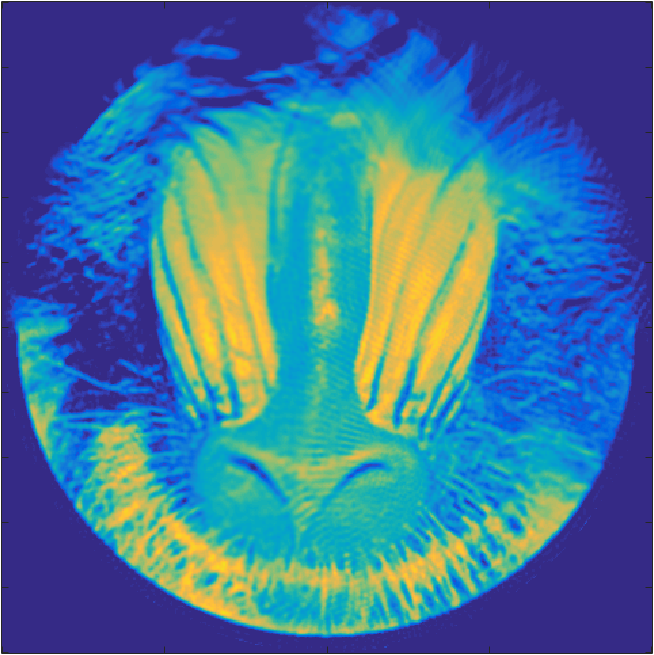}
\includegraphics[width=0.3\textwidth]{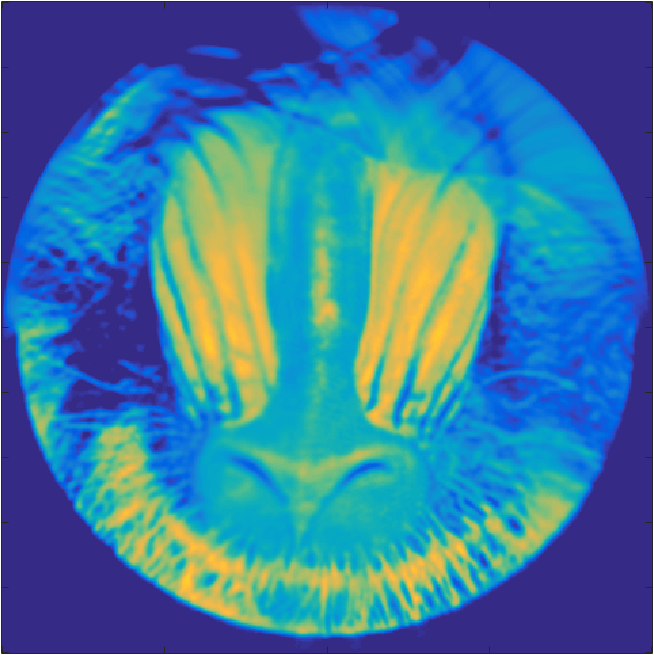}
\\
\includegraphics[width=0.3\textwidth]{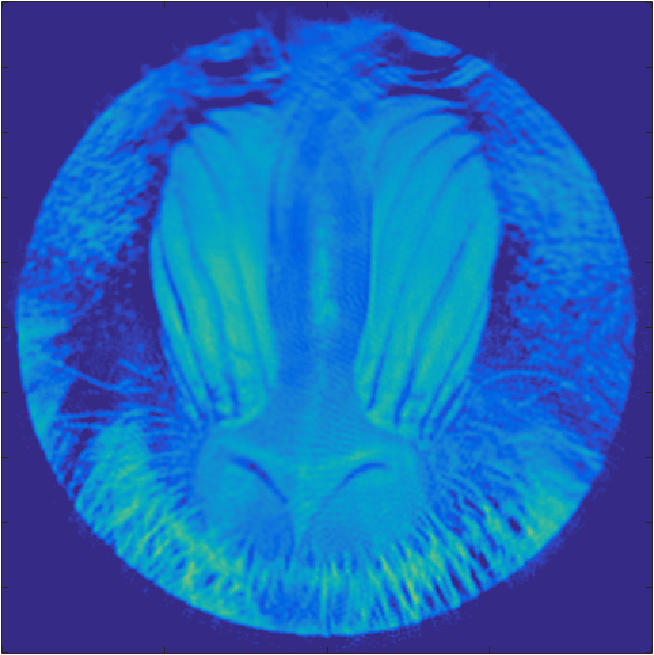}
\includegraphics[width=0.3\textwidth]{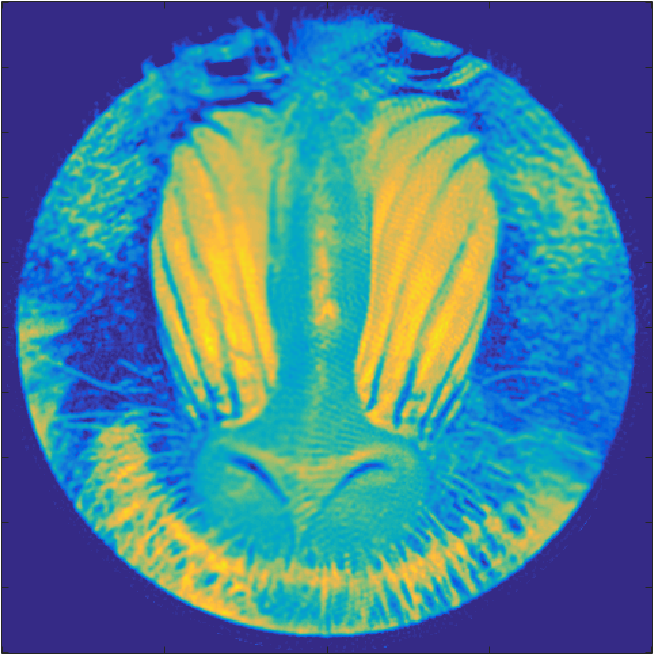}
\includegraphics[width=0.3\textwidth]{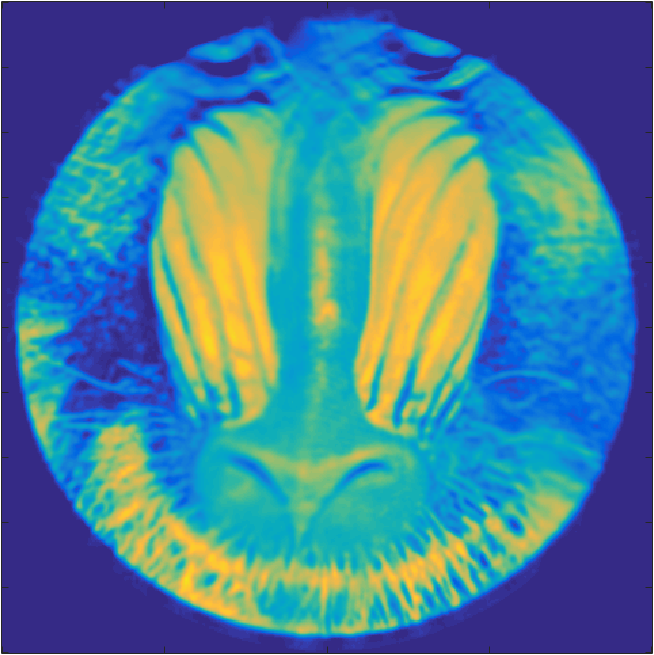}
\caption{Partial data reconstructions: Data are recorded on the lower unit half-circle and the parameters 
$\kappa$, $\rho$ are exactly given. Data are noise free and recorded for $T=2T_0$, $4T_0$.
Reconstructions for $T=2T_0$, $4T_0$ are shown in the first and second line, respectively.
\textbf{From left to right:} Time reversal, Neumann series with $k=5$, Landweber iterate $k=5$.}
\label{fig:halfcircle}
\end{figure}
\begin{figure}[ht]\center
\includegraphics[width=0.3\textwidth]{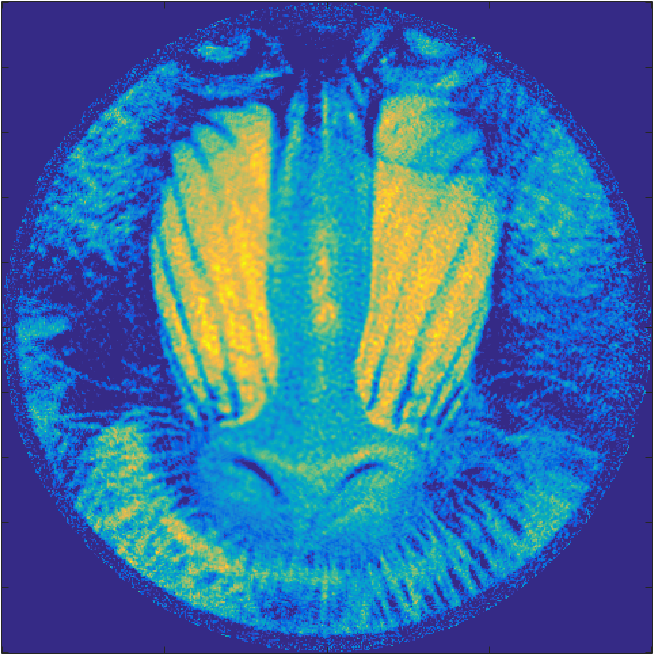}
\includegraphics[width=0.3\textwidth]{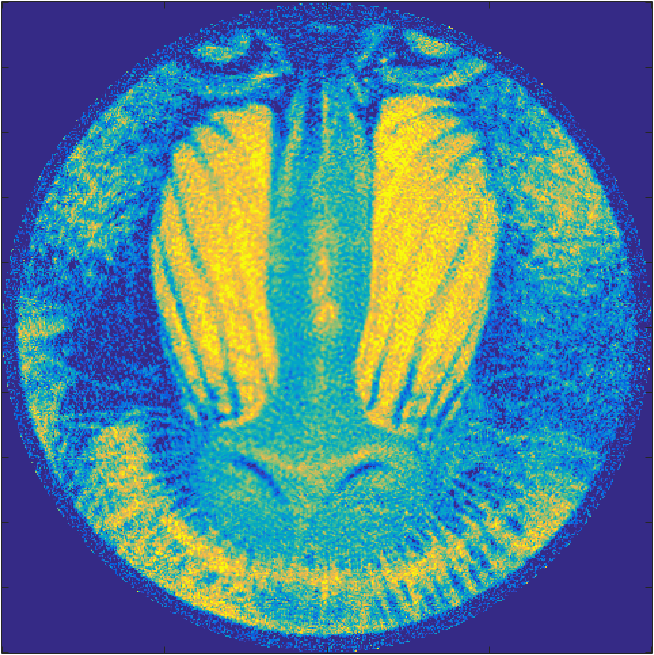}
\includegraphics[width=0.3\textwidth]{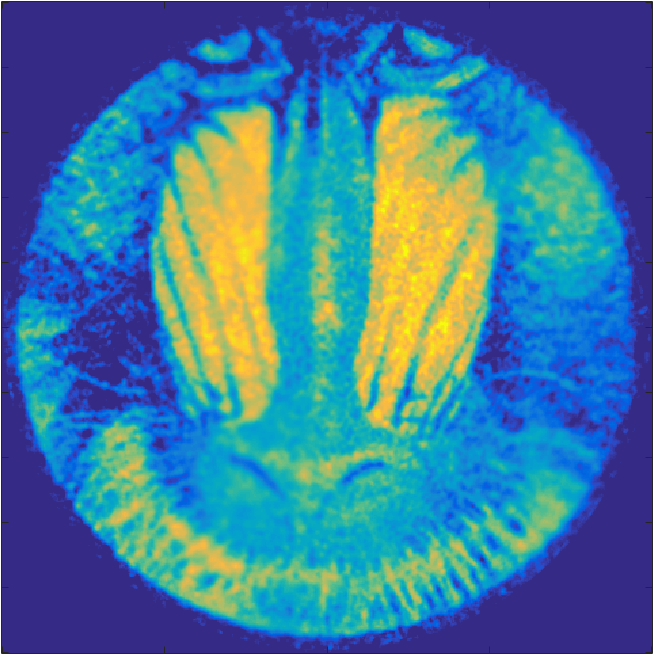}
\\
\includegraphics[width=0.3\textwidth]{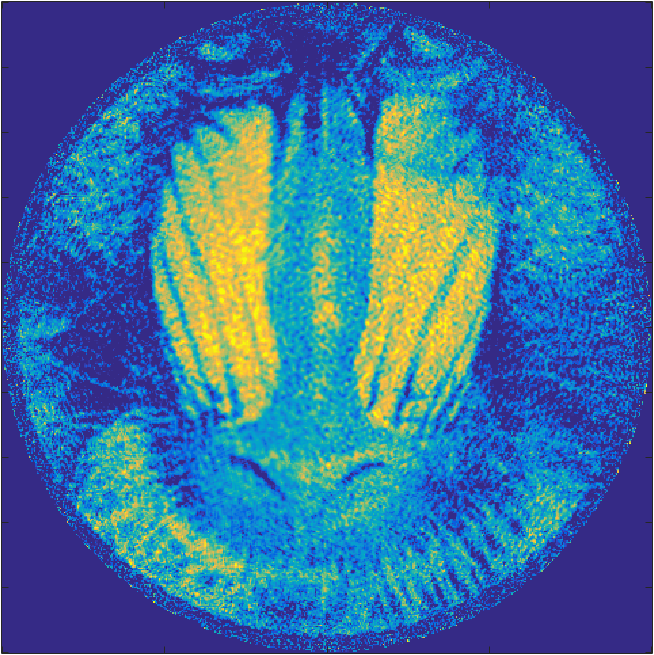}
\includegraphics[width=0.3\textwidth]{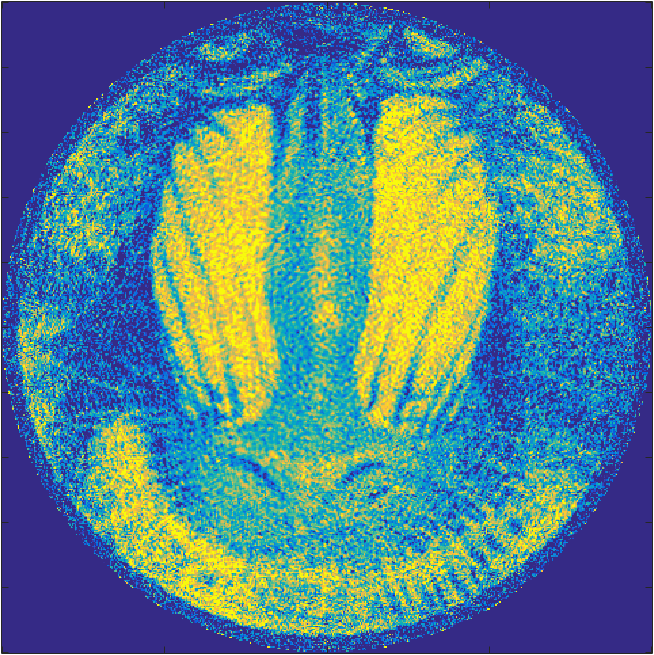}
\includegraphics[width=0.3\textwidth]{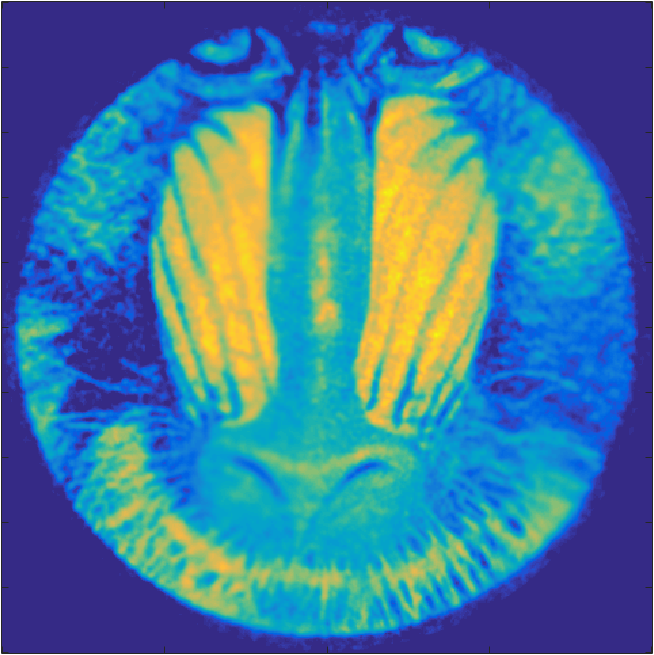}
\caption{Reconstruction with complete, but noisy measurements and correct parameters $\kappa,\rho$, 
measurement time $T=2T_0$.
\textbf{First line}: SNR $=10$dB.
\textbf{Second line}: SNR $=5$dB.
\textbf{From left to right:} Time reversal, Neumann series with $k=5$, Landweber iterate $k=5$.}
\label{fig:noise}
\end{figure}

\subsection{Test Example 2 -- \emph{Fish}}
The second test concerns a parameter setting $\kappa,\rho$ and associated sound speed $c=(\kappa \rho)^{-1/2}$, which 
is reconstructed with inversions (time reversal, Neumann series, Landweber iteration) based on \autoref{eq:wave_old}.

The phantom simulates a water-like body (e.g. soft tissue) containing an inclusion with significantly different acoustic properties, 
like the air-filled swim-bladder of a fish (see lower line of \autoref{fig:parameters}).
We choose the parameters to be bounded in the intervals $0.02\leq\rho\leq 1$ and $1\leq\kappa\leq 14.3$, 
with high gradients in the transition between the inclusion and the rest of the domain (see also \autoref{fig:parameters}).

Moreover, we include the achievable results when only $c=(\kappa \rho)^{-1/2}$ is known,
leading to a modeling error in the reconstructions.
\begin{enumerate}[label=\alph*]
\item
In the first line of \autoref{fig:fish} we display the reconstructions with correct parameters $\kappa,\rho$ 
as depicted in the second line of \autoref{fig:parameters}.
\end{enumerate}

\begin{enumerate}[resume,label=\alph*]
\item
Next, reconstructions are performed by using the parameters $\kappa_1=\kappa \rho$ and $\rho_1=1$.
This displays the usually considered approximation (see second line of \autoref{fig:fish}).
\item
We also try it the other way round by setting $\rho_2 = \kappa \rho$, and $\kappa_2\equiv 1$ (third line of \autoref{fig:fish}).
This leads to the wave equation in pure divergence form, and displays the case where the compressibility variations are negligible.
\end{enumerate} 
However, in b and c the modeling error leads to severe artifacts near regions of high-gradient regions of the parameters.

\subsection{Results}
In the presented test examples, all three methods qualitatively reconstruct the same features. 
Time reversal however fails to give a quantitatively correct results, 
due to the relatively short measurement times in use. 
Neumann series and Landweber iteration perform at the same level. 
As expected from theory, the Landweber reconstructions appear slightly smoother, specifically in \autoref{fig:noise}.

The second test clearly indicates that a modeling error in the reconstruction method can lead to severe artifacts
near regions where the parameter gradients are large.

\begin{figure}[ht]\center
\includegraphics[width=0.3\textwidth]{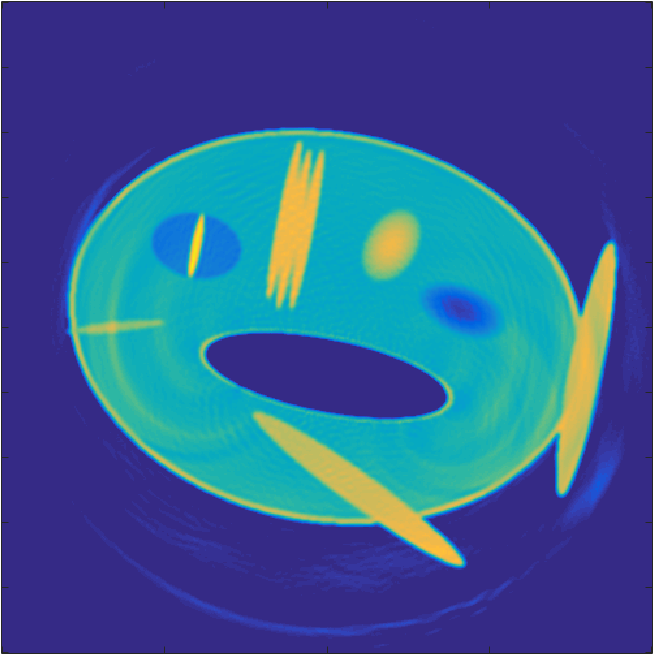}
\includegraphics[width=0.3\textwidth]{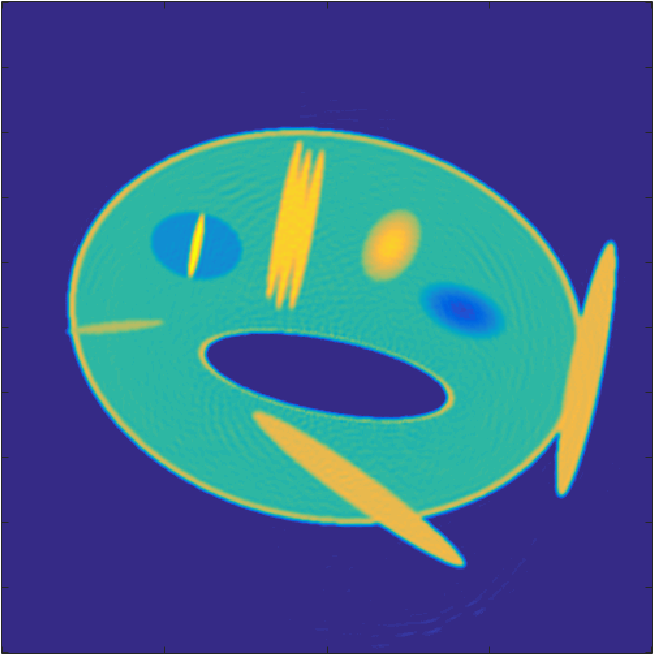}
\includegraphics[width=0.3\textwidth]{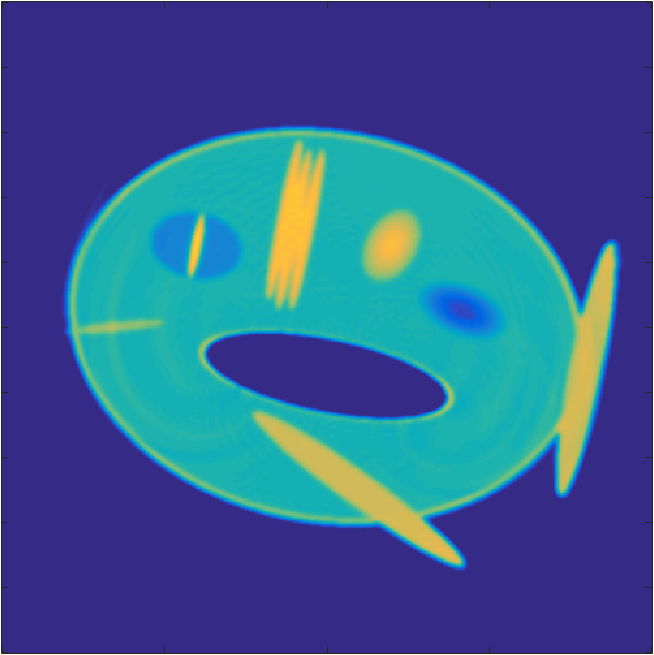}
\\
\includegraphics[width=0.3\textwidth]{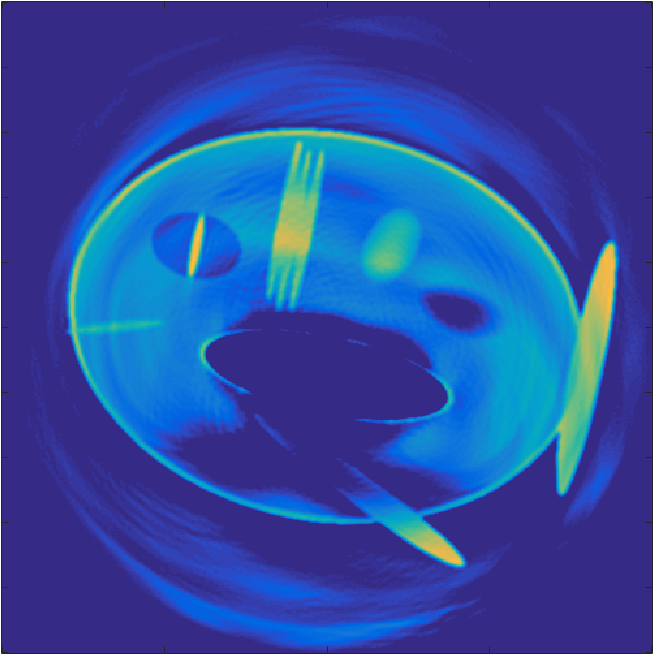}
\includegraphics[width=0.3\textwidth]{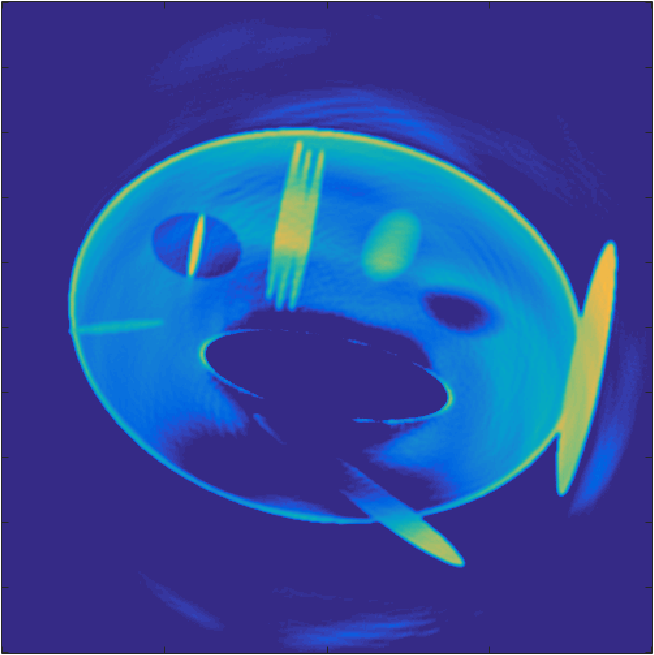}
\includegraphics[width=0.3\textwidth]{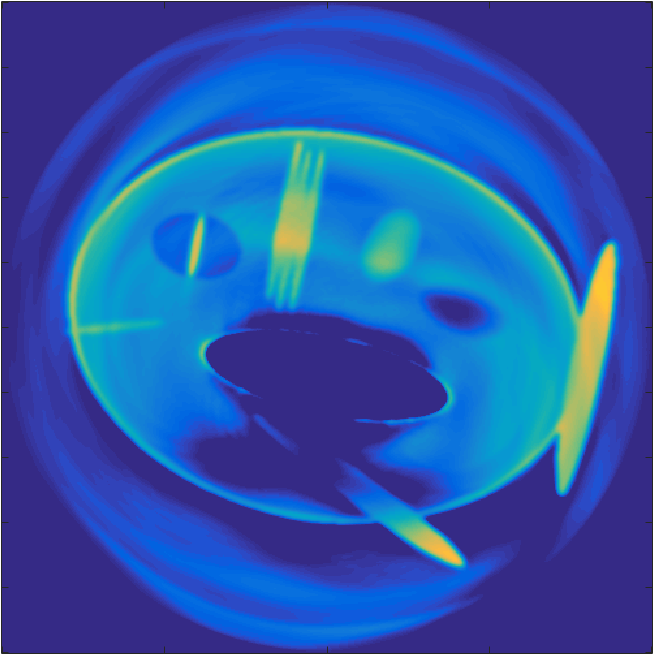}
\\
\includegraphics[width=0.3\textwidth]{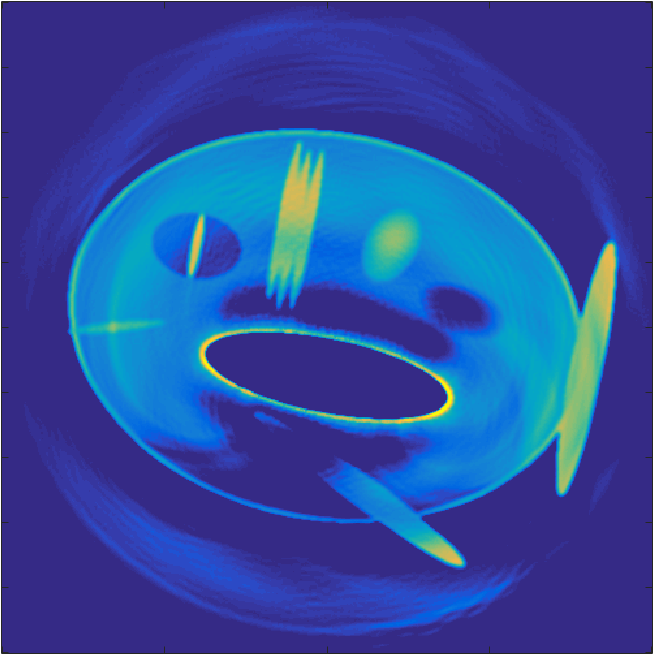}
\includegraphics[width=0.3\textwidth]{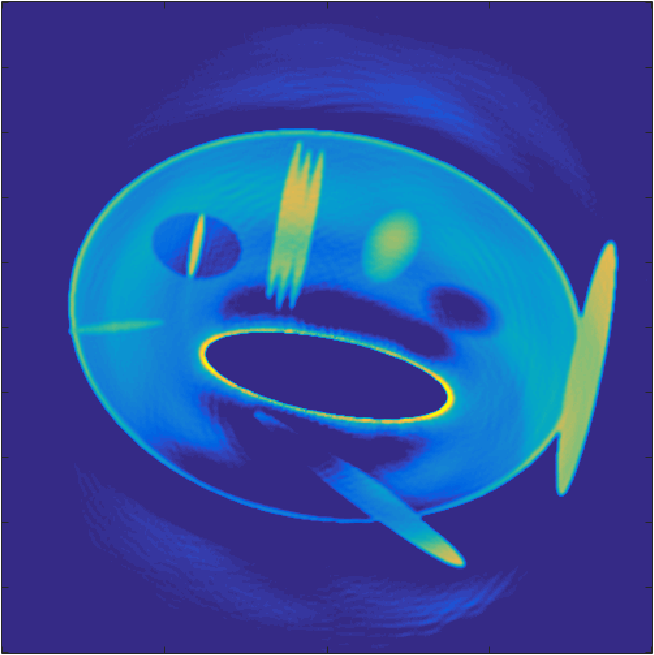}
\includegraphics[width=0.3\textwidth]{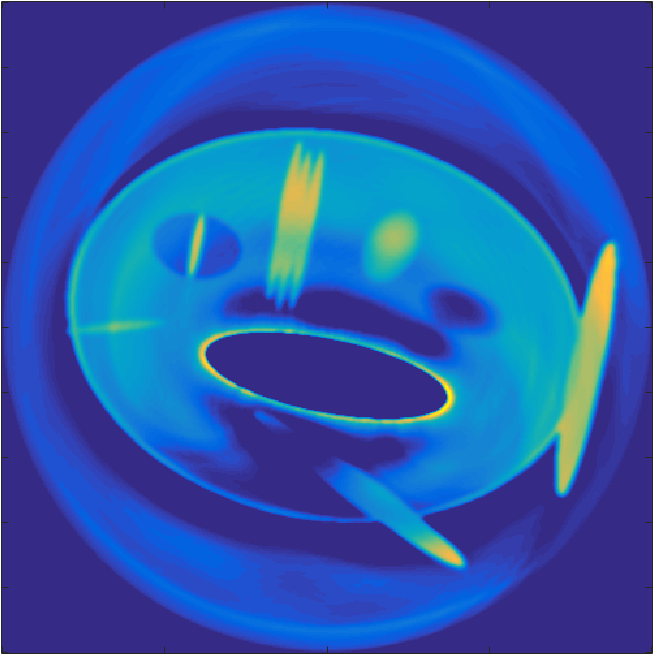}
\caption{Reconstructions of the \emph{Fish} phantom with $T=2T_0$.
\textbf{First line}: Reconstruction using correct parameters $\kappa_1=\kappa$, $\rho_1=\rho$ as pictured in the second line of \autoref{fig:parameters}.
\textbf{Second line}: Reconstruction with parameters $\kappa_1=\kappa\rho$, $\rho_1\equiv 1$.
\textbf{Third line}: Reconstruction with parameters $\rho_1=\kappa\rho$, $\kappa_1\equiv 1$.
\textbf{From left to right:} Time reversal, Neumann series with $k=5$, Landweber iterate $k=5$.}\label{fig:fish}
\end{figure}

\section{Conclusions}
In this work we have studied photoacoustic imaging based on a general wave model with spatially variable 
compressibility and density, respectively. 
We have implemented Neumann series and Landweber iteration for photoacoustic imaging 
based on this general equation and we compared the result to conventional time reversal as discussed in 
\cite{TreCox10}.

The numercial methods for photoacoustic imaging reveal the differences as outlined in \autoref{tab:methods}
with respect to convergence, stability and robustness against noise at the present stage of research. Stability 
is understood in the sense of regularization theory \cite{Gro84}, meaning that the Landweber iterates determined by 
a discrepancy principle approximate the minimum norm solution.

\begin{table}[ht]
\centering
  \label{tab:table1}
  \begin{tabular}{|l||c|c|c|}
  \hline
    &  \textbf{Time Reversal} & \textbf{Neumann} & \textbf{Landweber}\\
    \hline
    \textbf{\large Convergence:} & & & best approx. sol.\\
    measurement time & $T\to\infty$ &  $T>T_1$ & $T>0$ \\
    sound speed & non-trapping & non-trapping & arbitrary\\
    data & full & full & partial\\ 
    \hline  
    \textbf{Stability:} & & & regularized sol.\\
    measurement time & $T>T_1$ & $T>T_1/2$ & $T>0$\\
    sound speed & non-trapping & non-trapping & arbitrary\\
    data & full & partial & partial\\
    \hline
    \textbf{$L^2$-Noise:} & no & no & yes\\
    \hline
  \end{tabular}  
  \caption{Overview on the different photoacoustic imaging methods.}\label{tab:methods}
\end{table}
Numerical results show the reconstructions in the case of error prone data and under modeling errors. We emphasize that, 
so far, an error analysis is only possible for the Landweber iteration. 

\FloatBarrier
\section*{Acknowledgement}
The work of TG and OS is supported by the Austrian Science Fund (FWF), Project P26687-N25 
Interdisciplinary Coupled Physics Imaging. We are grateful to the valuable hints of Silvia Falletta concerning
details to discretization and implementation of the coupled FEM-BEM system.

%

\def\cprime{$'$}
  \providecommand{\noopsort}[1]{}\def\ocirc#1{\ifmmode\setbox0=\hbox{$#1$}\dimen0=\ht0
  \advance\dimen0 by1pt\rlap{\hbox to\wd0{\hss\raise\dimen0
  \hbox{\hskip.2em$\scriptscriptstyle\circ$}\hss}}#1\else {\accent"17 #1}\fi}

\end{document}